\definecolor {processblue}{cmyk}{0.96,0,0,0}
\newcommand\cyr{%
\renewcommand\rmdefault{wncyr}%
\renewcommand\sfdefault{wncyss}%
\renewcommand\encodingdefault{OT2}%
\normalfont
\selectfont}
\DeclareTextFontCommand{\textcyr}{\cyr}
\DeclareFontFamily{OT1}{rsfs}{}
\DeclareFontShape{OT1}{rsfs}{n}{it}{<-> rsfs10}{}
\DeclareMathAlphabet{\mathscr}{OT1}{rsfs}{n}{it}
\numberwithin{equation}{section}
\newtheorem{theorem}{Theorem}[section]
\newtheorem{lem}[theorem]{Lemma}
\newtheorem{prop}[theorem]{Proposition}
\theoremstyle{definition}
\theoremstyle{remark}
\newtheorem{remark}[theorem]{Remark}
\newcommand{\Ass}{\operatorname{Ass}}
\renewcommand{\ker}{\operatorname{ker}}
\newcommand{\Spec}{\operatorname{Spec}}
\newcommand{\id}{\operatorname{id}}
\newcommand{\Ext}{\operatorname{Ext}}
\newcommand{\tr}{\operatorname{tr}}
\newcommand{\Supp}{\operatorname{Supp}}
\newcommand{\Tor}{\operatorname{Tor}}
\newcommand{\Hom}{\operatorname{Hom}}
\newcommand{\End}{\operatorname{End}}
\newcommand{\coker}{\operatorname{coker}}
\newcommand{\Ann}{\ensuremath{\operatorname{Ann}}}
\newcommand{\p}{\mathfrak{p}}
\newcommand{\m}{\mathfrak{m}}
\newcommand{\w}{\omega}
\begin{document}
\title[Centers of Endomorphism Rings and Reflexivity]{Centers of Endomorphism Rings and Reflexivity}

\author[Dey]{Souvik Dey}
\address{Department of Mathematical Sciences\\
University of Arkansas\\
Fayetteville, AR 72701 USA}
\email[Souvik Dey]{souvikd@uark.edu}
\urladdr{https://sites.google.com/view/souvikdey}

\author[Lyle]{Justin Lyle}
\address{Department of Mathematics and Statistics \\ 221 Parker Hall\\
Auburn University\\
Auburn, AL 36849}
\email[Justin Lyle]{jll0107@auburn.edu}
\urladdr{https://jlyle42.github.io/justinlyle/}

\thanks{Souvik Dey was partly supported by the Charles University Research Center program No.UNCE/SCI/022 and a grant GACR 23-05148S from the Czech Science Foundation}

\subjclass[2020]{13D07, 16S50, 13C14}

\keywords{trace ideal, Endomorphism ring, vanishing of Ext}

\begin{abstract}
Let $R$ be a local ring and let $M$ be a finitely generated $R$-module. Appealing to the natural left module structure of $M$ over its endomorphism ring and corresponding center $Z(\End_R(M))$, we study when various homological properties of $M$ are sufficient to force $M$ to have a nonzero free summand. Consequences of our work include a partial converse to a well-known result of Lindo describing $Z(\End_R(M))$ when $M$ is faithful and reflexive, as well as some applications to the famous Huneke-Wiegand conjecture.
\end{abstract}

\maketitle

\section{Introduction}


Let $R$ be a commutative Noetherian local ring and let $M$ be a finitely generated $R$-module. The endomorphism algebra $\End_R(M)$ captures a wealth of information about the module $M$, and even about the ring $R$ itself. However, this information can be difficult to access in general; the algebra $\End_R(M)$ is nearly always noncommutative, and nearly always much larger in several senses than the ring $R$. Narrowing our attention to the center $Z(\End_R(M))$ of $\End_R(M)$, we retain a wealth of desirable information, but are left with an algebra that much easier to work with in general, and obviously more amenable to the techniques of commutative algebra. Indeed, work of Lindo in the previous decade shows there is a tight connection between $Z(\End_R(M))$ and endomorphism algebras of trace ideals, classically studied objects that work of Lindo and others have spurred into renewed attention; see e.g. \cite{Li17,HH19,Fa20,HK23,DL24,LM24} for but a few examples. 

To be concrete, the \emph{trace ideal} of $M$ is the ideal
\[\tr_R(M):=\langle f(x) \mid f \in \Hom_R(M,R), x \in M \rangle\]
and the key result of Lindo is the following:
\begin{theorem}[{\cite[Theorem 3.9]{Li17}}]\label{Lindointro}
Suppose $M$ is a faithful reflexive $R$-module. Then there is an isomorphism of $R$-algebras $Z(\End_R(M)) \cong \End_R(\tr_R(M))$.
\end{theorem}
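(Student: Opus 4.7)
The plan is to exhibit mutually inverse $R$-algebra maps $\alpha: Z(\End_R(M)) \to \End_R(\tr_R(M))$ and $\beta: \End_R(\tr_R(M)) \to Z(\End_R(M))$. As a preliminary, I would verify that $\tr_R(M)$ is itself a faithful ideal: if $r \cdot \tr_R(M) = 0$, then $r \cdot M^* = 0$, and since $M \cong M^{**} = \Hom_R(M^*,R)$, this forces $rM = 0$, hence $r = 0$ by faithfulness of $M$. In particular, $R$ embeds as a subring of both $R$-algebras under consideration.

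The map $\beta$ uses reflexivity head-on. Given $\psi \in \End_R(\tr_R(M))$ and $x \in M$, the assignment $f \mapsto \psi(f(x))$ is $R$-linear in $f$ and hence defines an element $\xi_x \in M^{**}$; I transport it back to $M$ via the reflexivity isomorphism $\eta: M \xrightarrow{\sim} M^{**}$ to set $\beta(\psi)(x) := \eta^{-1}(\xi_x)$. Using the identity $\phi^{**} \circ \eta = \eta \circ \phi$ for any $\phi \in \End_R(M)$, one computes $\phi^{**}(\xi_x) = \xi_{\phi(x)}$, and this gives $\phi \circ \beta(\psi) = \beta(\psi) \circ \phi$, so $\beta(\psi) \in Z(\End_R(M))$.

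The map $\alpha$ is defined by $\alpha(\phi)\bigl(\sum_i f_i(x_i)\bigr) := \sum_i f_i(\phi(x_i))$, and the main obstacle is well-definedness: the implication $\sum_i f_i(x_i) = 0 \Rightarrow \sum_i f_i(\phi(x_i)) = 0$. My plan is to derive from centrality of $\phi$ a ``scalar-like'' identity by commuting $\phi$ with the rank-one endomorphisms $\theta_{x, f}: y \mapsto f(y) x$. Comparing $\phi \circ \theta_{x,f}$ with $\theta_{x,f} \circ \phi$ yields $f(y) \phi(x) = f(\phi(y)) x$ in $M$, and pairing with any $g \in M^*$ gives
\[ f(y) g(\phi(x)) = f(\phi(y)) g(x) \quad \text{for all } f, g \in M^*,\ x, y \in M. \]
Specializing $(f, y) = (f_i, x_i)$ and summing over $i$ produces $\bigl(\sum_i f_i(x_i)\bigr) g(\phi(x)) = g(x) \bigl(\sum_i f_i(\phi(x_i))\bigr)$, so the hypothesis $\sum_i f_i(x_i) = 0$ forces $\sum_i f_i(\phi(x_i))$ to annihilate every element of $\tr_R(M)$. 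By the faithfulness of $\tr_R(M)$ established above, this gives $\sum_i f_i(\phi(x_i)) = 0$, as required.

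The remaining verifications are routine. Directly from the definitions, $\alpha(\beta(\psi))(f(x)) = f(\beta(\psi)(x)) = \eta(\beta(\psi)(x))(f) = \xi_x(f) = \psi(f(x))$, so $\alpha \circ \beta = \id$; conversely, $\beta(\alpha(\phi))(x) = \eta^{-1}(\xi'_x)$ with $\xi'_x(f) = f(\phi(x)) = \eta(\phi(x))(f)$, yielding $\beta(\alpha(\phi))(x) = \phi(x)$, so $\beta \circ \alpha = \id$. Both maps respect composition and send $r \cdot \id_M$ to multiplication by $r$, producing the claimed isomorphism of $R$-algebras. The essential content of the proof is thus concentrated in the well-definedness of $\alpha$, where centrality supplies the scalar identity and reflexivity ensures $\tr_R(M)$ is faithful so the identity can be cancelled.
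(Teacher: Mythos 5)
Your argument is correct, and it is worth noting that the paper itself offers no proof of this statement: it is quoted directly from Lindo's work \cite[Theorem 3.9]{Li17}, so the comparison is really with Lindo's original argument. Your proof is a sound, self-contained version of the same circle of ideas: the inverse map $\beta$ is exactly the action of $\End_R(\tr_R(M))$ on $M^{**}\cong M$ obtained from $\Hom_R(M,R)=\Hom_R(M,\tr_R(M))$, which is how the surjectivity direction is handled in Lindo's treatment via module-theoretic identifications, whereas you carry it out by explicit evaluation functionals $\xi_x$. Where you diverge most is in the forward direction: rather than invoking structural results about torsionless faithful modules, you verify well-definedness of $\alpha$ by hand, commuting a central $\phi$ with the rank-one endomorphisms $\theta_{x,f}$ to get the scalar identity $f(y)\,g(\phi(x))=f(\phi(y))\,g(x)$ and then cancelling against the faithful ideal $\tr_R(M)$ (whose faithfulness you correctly deduce from $rM^*=0\Rightarrow rM^{**}=0$ together with reflexivity and faithfulness of $M$). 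All the remaining verifications you label routine ($R$-linearity, centrality of $\beta(\psi)$ via $\phi^{**}\circ\eta=\eta\circ\phi$, multiplicativity, and the two composites being identities on the generators $f(x)$) do check out, so the proof stands; its advantage is that it is elementary and element-level, at the cost of reproving in-line facts (such as faithfulness of the trace ideal and the identification of $\End_R(\tr_R(M))$ acting on $M^{**}$) that Lindo's development packages into earlier lemmas.
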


The goal of this note is to expound upon the isomorphism in Theorem \ref{Lindointro} and to apply these ideas to provide sufficient criteria for when $Z(\End_R(M)) \cong R$. 

Our first main result offers a partial converse to Lindo's result:
\begin{theorem}\label{mainthm1intro}(see \Cref{Lindoconverse})

Let $R$ be a local ring and let $M$ be a finitely generated $R$-module. Let $(-)^*:=\Hom_R(-,R)$.

\begin{enumerate}

\item[$(1)$] Suppose $\Supp(M)=\Spec(R)$. If $\Ext^1_R(M,M)=0$ and $\End_R(M) \cong \Hom_R(M,M^{**})$ as right $\End_R(M)$-modules, then $M$ is reflexive.

\item[$(2)$] Suppose $M$ is torsion-free and is locally free of full support on the punctured spectrum of $R$. Further suppose that $\End_R(M)$ is local, e.g. $R$ is Henselian and $M$ is indecomposable. If $\Ext^1_{\End_R(M)}(M,M)=0$ and $Z(\End_R(M)) \cong \End_R(\tr_R(M))$, then $M$ is reflexive.

\end{enumerate}

\end{theorem}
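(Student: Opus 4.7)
The plan is to show in each case that the canonical biduality map $\delta_M\colon M\to M^{**}$ is an isomorphism. Set $K=\ker\delta_M$, $I=\im\delta_M$, and $C=\coker\delta_M$. Applying $\Hom_R(M,-)$ to the short exact sequences
\[
0\to K\to M\to I\to 0,\qquad 0\to I\to M^{**}\to C\to 0,
\]
and invoking the Ext-vanishing hypothesis from (1), I extract an exact sequence
\[
0\to\Hom_R(M,K)\to\End_R(M)\xrightarrow{\alpha}\Hom_R(M,M^{**})\to Q\to 0,
\]
where $\alpha(f)=\delta_M\circ f$ is right $\End_R(M)$-linear with $\ker\alpha=\Hom_R(M,K)$, and $Q$ fits in a short exact sequence $0\to\Ext^1_R(M,K)\to Q\to Q'\to 0$ with $Q'$ embedding in $\Hom_R(M,C)$. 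The goal then reduces to showing $\alpha$ is an isomorphism and extracting $K=C=0$ from the support hypothesis.

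For part (1), any right $\End_R(M)$-module iso $\End_R(M)\cong\Hom_R(M,M^{**})$ has the form $f\mapsto g\circ f$ for some generator $g\in\Hom_R(M,M^{**})$. Writing $\delta_M=g\circ h_0$ for the unique $h_0\in\End_R(M)$ gives $\alpha=\phi\circ L_{h_0}$, where $L_{h_0}$ is left multiplication by $h_0$; thus $\alpha$ is an isomorphism if and only if $h_0$ is a unit in $\End_R(M)$. I would verify this by combining the support hypothesis (localizing at minimal primes of $R$ to get stalkwise information, where $R_\p$ is Artinian local and $M_\p\neq 0$) with the abstract iso and Noetherianness of $\End_R(M)$ as a right $\End_R(M)$-module to promote $h_0$ to a global unit. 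Once $\alpha$ is an iso, $\Hom_R(M,K)=0$, $\Ext^1_R(M,K)=0$, and $Q'=0$ together with $\Supp M=\Spec R$ are used to force $K=C=0$, for example via trace-ideal or faithfulness arguments (the iso tends to imply that $\tr_R(M)$ is large enough to control $K$ and $C$).

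For part (2), I would run the parallel analysis with $\Hom_{\End_R(M)}(M,-)$ in place of $\Hom_R(M,-)$, using that $\Hom_{\End_R(M)}(M,M)=Z(\End_R(M))$ together with $\Ext^1_{\End_R(M)}(M,M)=0$ to produce an analogous exact sequence with source $Z(\End_R(M))$. The hypothesis $Z(\End_R(M))\cong\End_R(\tr_R(M))$ (the conclusion of Lindo's \Cref{Lindointro} in the reflexive case) then plays the role of the right-module iso from (1). The refinements simplify the remainder: torsion-freeness of $M$ gives $K=0$ directly, local freeness on the punctured spectrum makes $C$ finite length, and locality of $\End_R(M)$ makes ``$h_0$ is a unit'' equivalent to ``$h_0$ avoids the unique maximal ideal of $\End_R(M)$,'' so the crux step is accessible via a Nakayama-type argument rather than a rank comparison.

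The main obstacle in both parts is translating the abstract iso (of right $\End_R(M)$-modules in (1), or of centers in (2)) into the concrete statement that $\alpha$ is an isomorphism—equivalently, that the element $h_0\in\End_R(M)$ built from $\delta_M$ is a unit. This appears to be the technical heart of the argument, requiring a careful interplay of support, Ext vanishing, and (in (2)) the noncommutative locality hypothesis, and is where I expect the proof to spend its ingenuity.
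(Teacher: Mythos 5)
Your overall strategy---turn the abstract isomorphism into a single map $M\to M^{**}$ and then kill its kernel and cokernel using Hom/Ext vanishing together with the support hypotheses---is indeed the paper's strategy, but your execution has a genuine gap exactly where you flag it. You insist on working with the canonical map $\delta_M$, write $\delta_M=g\circ h_0$, and then need $h_0$ to be a unit of $E:=\End_R(M)$; for this you offer only the hope of ``combining the support hypothesis \dots{} with Noetherianness to promote $h_0$ to a global unit.'' No such promotion is available: information at minimal (or all non-maximal) primes does not make an endomorphism a unit, and in part (2) locality of $E$ only reduces ``unit'' to ``$h_0\notin J(E)$,'' which you also do not establish. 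The point you are missing is that this step is never needed. By the elementary Lemma \ref{quicklem}, \emph{every} right $E$-linear map $E\to\Hom_R(M,M^{**})$ is of the form $f\mapsto\alpha\circ f$ for a single $\alpha\in\Hom_R(M,M^{**})$ (and every $Z(E)$-linear map $Z(E)=\Hom_E(M,M)\to\Hom_E(M,M^{**})$ is $\Hom_E(M,\theta)$ for an $E$-linear $\theta\colon M\to M^{**}$). So one takes the map $\alpha$ (resp.\ $\theta$) corresponding to the \emph{given} isomorphism and proves that it is injective and surjective: $\Hom_R(M,\ker\alpha)=0$ together with $\ker\alpha\subseteq M$ kills the kernel, and $\Ext^1_R(M,M)=0$ (resp.\ $\Ext^1_E(M,M)=0$) plus full support kills the cokernel; reflexivity is then read off from the resulting isomorphism $M\cong M^{**}$ (this is how \Cref{Lindoconverse} is proved). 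Your attempt stalls precisely because you demand that the isomorphism be the one induced by $\delta_M$, and the ``technical heart'' you defer is not filled in.

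Two further gaps in part (2). First, your hypothesis must be converted into an isomorphism $Z(E)\cong\Hom_E(M,M^{**})$ before the machinery starts; the paper does this via $Z(\End_R(M^*))\cong\End_R(\tr_R(M))\cong\End_E(M^*)\cong\Hom_E(M,M^{**})$, and your sketch passes over this silently. Second, the substantive work in (2) is showing that the finite-length left $E$-module $\coker\theta$ vanishes given only $\Hom_E(M,\coker\theta)=0$: the paper embeds the unique simple module $E/J(E)$ into $\coker\theta$ (using that $E$ is local and the length is finite) and manufactures a nonzero $E$-linear map $M\twoheadrightarrow M/J(E)M\twoheadrightarrow E/J(E)\hookrightarrow\coker\theta$, using that $M/J(E)M$ is free over the division ring $E/J(E)$. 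Your ``Nakayama-type argument'' is aimed at the (unnecessary) unit claim and supplies none of this. Likewise, ``torsion-freeness of $M$ gives $K=0$ directly'' is not correct for $K=\ker\delta_M$, since torsion-free does not imply torsionless over a general local ring; the paper instead shows the kernel of its map is zero locally on the punctured spectrum (a computation over matrix rings, using local freeness), hence of finite length, and only then invokes torsion-freeness.
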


A byproduct of Theorem \ref{Lindointro} is that $Z(\End_R(M))$ is a birational extension of $R$ under mild hypotheses, and a sufficient understanding of the properties of this extension would offer solutions to several longstanding conjectures, e.g. the Huneke-Wiegand conjecture and certain cases of the Auslander-Reiten conjecture. For instance, Lindo uses these ideas to prove a torsion-free faithful module $M$ must have a free summand if $\Ext^1_R(M,M)=0$ and the rings $R$ and $Z(\End_R(M))$ are Gorenstein \cite[Theorem 6.4]{Li17}. It can be drawn from this that the Huneke-Wiegand conjecture will hold for a Gorenstein domain $R$ of dimension $1$ if the condition $\Ext^1_R(M,M)=0$ always implies that $Z(\End_R(M))$ is Gorenstein. Indeed, the condition $\Ext^1_R(M,M)=0$ is preserved under removing free summands, so Lindo's result can be applied inductively.

Our second main theorem adopts this perspective to provide several similar sufficient homological criteria under which $M$ must have a nonzero free summand. 

\begin{theorem}\label{mainthm2intro}(see Theorems \ref{thm4.2} and \ref{thm4.4})
Let $R$ be Cohen-Macaulay local ring of dimension $1$ and let $M$ be a finitely generated torsion-free $R$-module of positive rank satisfying $\Ext^1_R(M,M)=0$. Suppose additionally that one of the following holds: 
\begin{enumerate}
    \item[$(1)$] $M$ is reflexive as a $Z(\End_R(M))$-module.
    \item[$(2)$] $\Ext^2_{Z(\End_R(M))}(M,M)=0$.
    \item[$(3)$] $M \cong I$ where $I$ is an ideal of positive grade in $R$ and $\Ext^1_{\End_R(I)}(I,\End_R(I))=0$,
    
\end{enumerate}
Then $Z(\End_R(M)) \cong R$. If additionally, $M$ is a reflexive $R$-module, then $R$ is a summand of $M$.

\end{theorem}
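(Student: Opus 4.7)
The plan is to establish $Z(\End_R(M)) \cong R$ in each of the three cases, and then to deduce the final free-summand assertion from Lindo's theorem when $M$ is additionally reflexive. Throughout, set $S := Z(\End_R(M))$ and let $Q$ denote the total quotient ring of $R$. Because $R$ is one-dimensional Cohen--Macaulay and $M$ is torsion-free of positive $R$-rank, $M$ is faithful with $\Supp(M) = \Spec(R)$; since $Q \otimes_R \End_R(M)$ is a matrix algebra over $Q$, taking centers identifies $S$ with a module-finite birational extension of $R$ sitting inside $Q$. Centrality of $S$ in $\End_R(M)$ yields $\End_R(M) = \End_S(M)$, so that $M$ is a faithful $S$-module of the same rank.

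The unifying strategy is to produce an isomorphism of the form $S \cong \End_R(\tr_R(M))$ by means of Theorem \ref{Lindointro}, which demands $R$-reflexivity of $M$, and then to collapse this birational extension to $R$ using the conductor sequence $0 \to R \to S \to S/R \to 0$ (with $S/R$ of finite length) in combination with the hypothesis $\Ext^1_R(M,M) = 0$. In case (1), $S$-reflexivity of $M$ combined with Theorem \ref{Lindointro} applied over $S$ gives $S \cong \End_S(\tr_S(M))$, and the adjunction $\Hom_R(-, N) \cong \Hom_S(-, \Hom_R(S, N))$ for $N$ an $S$-module lets one compare $\Hom_R(M, M^{**})$ with $\End_S(M)$, verifying the criterion of Theorem \ref{mainthm1intro}(1) and yielding $R$-reflexivity. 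In case (2), I would use the change-of-rings spectral sequence $\Ext^p_S(M, \Ext^q_R(S, M)) \Rightarrow \Ext^{p+q}_R(M, M)$, whose low-degree edge maps, under the vanishings $\Ext^1_R(M,M) = 0$ and $\Ext^2_S(M,M) = 0$, supply exactly the identification required by Theorem \ref{mainthm1intro}(1). In case (3), $\End_R(I) = (I :_Q I)$ is automatically commutative, so $S = \End_R(I)$ with no work, and the hypothesis $\Ext^1_{\End_R(I)}(I, \End_R(I)) = 0$ combined with $\Ext^1_R(I, I) = 0$ can be fed directly into the conductor argument without a detour through reflexivity.

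The main obstacle is the collapse step: forcing $S/R = 0$. My intention is to apply $\Hom_R(-, M)$ and $\Hom_R(M, -)$ to the conductor sequence and extract, from $\Ext^1_R(M, M) = 0$ and the $S$-module structure on $M$, enough vanishing to exclude any nonzero composition factor in $S/R$. Concretely, one shows first that $\Tor_1^R(S, M) = 0$ (using that $M \hookrightarrow S \otimes_R M$ is split by the action of $S$ on $M$), which together with $\Ext^1_R(M, M) = 0$ forces $\Ext^1_R(S/R, M) = 0$; a depth-and-length computation in the spirit of Auslander--Bridger then rules out $S/R \ne 0$. This step is where the bulk of the technical work lies, because the $\Ext$ vanishing over $R$ must be transported through the Lindo identification to an $S$-side statement, and each of the three cases contributes its own mechanism for this transport.

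For the final conclusion, once $S = R$ is established and $M$ is additionally $R$-reflexive, a second application of Theorem \ref{Lindointro} over $R$ gives $R \cong \End_R(\tr_R(M))$; since $\tr_R(M)$ has positive grade (as $M$ is faithful) and trivial idealizer in $Q$, the reflexivity of $M$ together with $\Ext^1_R(M, M) = 0$ forces $\tr_R(M) = R$, which by a standard Nakayama argument is equivalent to $R$ being a direct summand of $M$.
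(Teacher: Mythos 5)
Your overall architecture has genuine gaps at exactly the points where the real work lies. The most serious one is the ``collapse'' step. The splitting of $M\to S\otimes_R M$ by the $S$-action does \emph{not} give $\Tor_1^R(S,M)=0$: tensoring $0\to R\to S\to S/R\to 0$ with $M$, the splitting only shows the connecting map $\Tor_1^R(S/R,M)\to M$ is zero, i.e.\ it gives a surjection $\Tor_1^R(S,M)\twoheadrightarrow \Tor_1^R(S/R,M)$, not vanishing; in this setting such a Tor-vanishing is a Huneke--Wiegand-type statement and is essentially as hard as the theorem (indeed the paper takes $\Tor_1^R(I,E)=0$ as a \emph{hypothesis} in a separate result). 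The subsequent implication ``$\Tor_1^R(S,M)=0$ and $\Ext^1_R(M,M)=0$ force $\Ext^1_R(S/R,M)=0$'' is also unsubstantiated. What you actually need is $\Ext^1_R(S,M)=0$ (since $\Hom_R(S,M)\to M$ is onto, the conductor sequence gives $\Ext^1_R(S/R,M)\cong\Ext^1_R(S,M)$, and then Ischebeck-type nonvanishing kills $S/R$), and the only available source for that is that $S$ is a direct summand of some $M^{\oplus n}$, i.e.\ $\tr_S(M)=S$. That is precisely what the paper extracts in case (1) by applying Theorem \ref{Lindointro} over $S$ together with the lemma that a trace ideal with free dual equals the ring; your sketch never produces it.

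The case-by-case mechanisms are also off target. Your route in (1) and (2) is to verify the hypothesis of Theorem \ref{mainthm1intro}(1), namely $\End_R(M)\cong\Hom_R(M,M^{**})$ as right $\End_R(M)$-modules, where $(-)^*$ is the $R$-dual. But $S$-reflexivity is a statement about $S$-duals, and the adjunction $\Hom_R(M,N)\cong\Hom_S(M,\Hom_R(S,N))$ replaces $R$ by $\Hom_R(S,R)$, not by $S$; these cannot be identified without already knowing $S\cong R$ (or a Gorenstein-type condition on $S$ over $R$), so the comparison is circular. Similarly, the change-of-rings spectral sequence has $E_2$-terms $\Ext^p_S(M,\Ext^q_R(S,M))$ whose inner argument in degree $q=0$ is $\Hom_R(S,M)$, not $M$, so the hypothesis $\Ext^2_S(M,M)=0$ does not plug into the five-term exact sequence, and even granting vanishing it is unclear how a right-$\End_R(M)$-module isomorphism $\End_R(M)\cong\Hom_R(M,M^{**})$ would emerge. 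For comparison, the paper's proof of case (2) (Theorem \ref{thm4.4}) never passes through $R$-reflexivity of $M$: it compares a $Z(E)$-presentation of $M$ with the $\Hom_E(\Hom_R(-,M),M)$-dual of an $R$-presentation (Proposition \ref{firstprop}), and uses $\Ext^2_{Z(E)}(M,M)=0$ to kill the torsion kernel of $\beta^M_M$, forcing $Z(E)$ to be cyclic, hence $\cong R$; and case (3) (Theorem \ref{thm4.2}) is not a conductor collapse at all but a direct proof that $I$ is principal via torsion-freeness of suitable tensor products and the count $\mu_R(I)^2=\mu_R(I^2)\le\mu_R(I)(\mu_R(I)+1)/2$. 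Your final step (from $S\cong R$ and $R$-reflexivity to $R$ being a summand) is in the right spirit, but it needs the specific lemma that $\End_R(\tr_R(M))\cong R$ forces $\tr_R(M)\cong R$ for the trace ideal (via $(\tr_R(M))^*\cong\End_R(\tr_R(M))$ and \cite[Lemma 3.9]{DE21}), not just ``positive grade and trivial idealizer.''
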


In particular, as any torsion-free module over a Gorenstein ring of dimension $1$ is reflexive, Theorem \ref{mainthm2intro} gives a direct extension of \cite[Theorem 6.4]{Li17}, and shows that passage of reflexivity from the base $R$ to $Z(\End_R(M))$ can be viewed as a cornerstone of the Huneke-Wiegand conjecture.

We now outline the structure of our paper. In Section \ref{prelims} we provide some background and some preliminary results needed throughout. In Section \ref{lindoconversesection}, we prove Theorem \ref{mainthm1intro} and comment on its sharpness. Section \ref{rigidsection} gives several partial answers to the question as to when $Z(\End_R(M)) \cong R$, ultimately giving a proof of Theorem \ref{mainthm2intro}.

\section{Preliminaries}\label{prelims}

Throughout, we let $(R,\m,k)$ be a commutative Noetherian local ring. All $R$-modules are assumed to be finitely generated unless stated otherwise. If $M$ is an $R$-module, we let $\Omega^i_R(M)$ denote the $i$th syzygy of $M$. Throughout, we set $E:=\End_R(M)$ and note $M$ is natural a left $E$-module where $f \cdot x=f(x)$ for $f \in E$ and $x \in M$, and so $M$ in particular inherits a $Z(E)$-module structure. We let $(-)^*:=\Hom_R(-,R)$ and when $R$ is Cohen-Macaulay with canonical module $\w_R$, we let $(-)^{\vee}:=\Hom_R(-,\w_R)$. We recall that $M$ is said to be a \emph{generator} if for all $R$-modules $N$, there is a surjection $M^{\oplus a_N} \to N$ for some $a_N$. As $R$ is local, it is easily seen that $M$ is a generator if and only if $R$ is a direct summand of $M$.

A key object of study in this work is the trace submodule $\tr_N(M)$ of $M$ in $N$ defined as 
\[\tr_N(M):=\langle f(x) \mid f \in \Hom_R(M,N), x \in M \rangle.\] 
In the special case where $N=R$, $\tr_R(M)$ is an ideal of $R$ referred to simply as the \emph{trace ideal} of $M$. While one can see \cite{Li17,Fa20,Ly23} for a comprehensive treatment of trace submodules and trace ideals, we recall some of the key properties below that are suited to our needs:
\begin{prop}[see {\cite[Proposition 3.1]{Ly23}}]\label{basictraceprop}
If $A,B$ and $N$ are $R$-modules and $I$ is an ideal, then
\begin{enumerate}
\item[$(1)$] $\tr_N(A \oplus B)=\tr_N(A)+\tr_N(B)$.
\item[$(2)$] If $i:\tr_N(A) \to N$ is the natural inclusion, then $\Hom_R(A,i):\Hom_R(A,\tr_N(A)) \to \Hom_R(A,N)$ is an isomorphism.
\item[$(3)$] $M$ is a generator if and only if $\tr_R(M)=R$.
\item[$(4)$] If $L$ is a submodule of $N$, then $L \subseteq \tr_N(L)$ with equality if and only if $L=\tr_R(M)$ for some $R$-module $M$.
\item[$(5)$] If $S$ is a flat $R$-algebra, then $\tr_{N}(A \otimes_R S)=\tr_N(A)S$. In particular, taking trace respects localization and completion.

\end{enumerate}

\end{prop}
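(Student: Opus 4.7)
My proof plan is to treat each of the five parts individually; with the exception of (4), all are essentially formal consequences of the definition of a trace submodule.

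For (1), I would use the canonical decomposition $\Hom_R(A \oplus B, N) \cong \Hom_R(A,N) \oplus \Hom_R(B,N)$: every morphism $f : A \oplus B \to N$ has the form $f(a,b) = f_A(a) + f_B(b)$, so its image sits in $\tr_N(A)+\tr_N(B)$; conversely, any $f_A : A \to N$ or $f_B : B \to N$ extends by zero to $A \oplus B \to N$, realizing the reverse inclusion. For (2), injectivity of $\Hom_R(A,i)$ is inherited from injectivity of the inclusion $i$, and surjectivity is the defining property of $\tr_N(A)$: any $f : A \to N$ satisfies $f(A) \subseteq \tr_N(A)$ by definition, hence factors uniquely through $i$.

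For (3), if $R$ is a summand of $M$, a splitting $M \twoheadrightarrow R$ of $R \hookrightarrow M$ places $1$ in $\tr_R(M)$. Conversely, if $\tr_R(M)=R$ write $1 = \sum_i f_i(x_i)$ for suitable $f_i \in M^*$, $x_i \in M$; since $R$ is local at least one $f_j(x_j)$ must be a unit, so $f_j : M \to R$ is split surjective and $R$ is a summand of $M$. For (4), the inclusion $L \hookrightarrow N$ is itself a homomorphism with image $L$, showing $L \subseteq \tr_N(L)$. For the equivalence, the main input is (2): supposing $L$ is a trace submodule, say $L = \tr_N(M)$ for some $M$, then for any $g : L \to N$ and any $f : M \to N$, the factorization $f = i \circ f'$ with $f' : M \to L$ (coming from (2)) yields $g \circ f' : M \to N$, whose image lies in $\tr_N(M) = L$. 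Since the images of the $f'$ as $f$ varies generate $L$, we conclude $g(L) \subseteq L$, i.e., $\tr_N(L) \subseteq L$; the converse direction takes $M = L$.

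Finally, for (5), the argument reduces to the flat base change isomorphism $\Hom_R(A,N) \otimes_R S \cong \Hom_S(A \otimes_R S, N \otimes_R S)$, which applies since $A$ is finitely presented over the Noetherian ring $R$. Under this identification, the generators of $\tr_{N \otimes_R S}(A \otimes_R S)$ correspond to generators of $\tr_N(A) \cdot S$, yielding the equality, which specializes to the assertions about localization and completion. Of the five parts, only (4) carries any real subtlety, and the main obstacle there is to interpret the equality correctly and to route the factorization from (2) through the composition $g \circ f'$ so as to show $g$ preserves $L$.
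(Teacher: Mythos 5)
Your proof is correct; note that the paper itself gives no argument for this proposition, citing \cite[Proposition 3.1]{Ly23} instead, and your verifications of (1)--(5) are the standard ones (including the key use of (2) to show a trace submodule is closed under endomorphisms into $N$ in (4), and the flat base-change isomorphism for finitely presented modules in (5)). You also read the statement the intended way: ``$L=\tr_R(M)$'' in (4) should be understood as $L$ being a trace submodule $\tr_N(M)$ (or one takes $N=R$), and $\tr_N(A\otimes_R S)$ in (5) means $\tr_{N\otimes_R S}(A\otimes_R S)$ inside $N\otimes_R S$; with those readings nothing is missing.
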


We also record several facts contrasting modules structures over $R$ versus those over $Z(E)$ or $E$ that are generally known to experts:

\begin{prop}\label{RvsSprop}

Suppose $M$ is a torsionless faithful $R$-module.
\begin{enumerate}
\item[$(1)$] If $A$ is a right $E$-module and $B$ is a left $E$-module, then the natural map $A \otimes_R B \to A \otimes_E B$ is a surjection of $Z(E)$-modules. 
\item[$(2)$] If $A$ and $B$ are $Z(E)$-modules, then the natural map $p:A \otimes_R B \to A \otimes_{Z(E)} B$ is a surjection of $Z(E)$-modules with $\ker(p)$ torsion.
\item[$(3)$] If $M$ is torsionless, and if $A$ and $B$ are $Z(E)$-modules with $B$ torsion-free, then $\Hom_R(A,B)=\Hom_{Z(E)}(A,B)$.
\end{enumerate}

\end{prop}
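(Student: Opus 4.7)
The plan is to reduce all three parts to one key property: that under our hypotheses, $Z(E)$ is \emph{birational} over $R$, in the sense that $Z(E)$ embeds into the total quotient ring $K := W^{-1}R$ (where $W$ denotes the set of non-zerodivisors of $R$), and every $z \in Z(E)$ admits some $w \in W$ with $wz \in R$.

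To establish this birationality, I will argue that since $M$ is torsionless it is torsion-free, whence $E$ (and therefore $Z(E)$) is torsion-free as an $R$-module, so $Z(E) \hookrightarrow W^{-1}Z(E)$. Using that forming centers commutes with flat base change for finitely generated algebras, together with the identification $W^{-1}E = \End_K(W^{-1}M)$ for finitely presented $M$, one obtains $W^{-1}Z(E) = Z(\End_K(W^{-1}M))$. Faithfulness descends to the localization, so $W^{-1}M$ is a finitely generated faithful module over the Artinian ring $K$. I then invoke the identity $Z(\End_K(N)) = K$ for any such $N$---either by Krull--Schmidt reducing to the Artinian local case and doing a centralizer computation, or, more in the spirit of the paper, by noting that $\tr_R(M)$ contains a non-zerodivisor (via prime avoidance applied to the embedding $M \hookrightarrow R^n$, since any associated prime containing $\tr_R(M)$ would annihilate $M$) and then showing $Z(E)$ embeds into $\End_R(\tr_R(M)) \subseteq K$. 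Either way, $W^{-1}Z(E) = K$ and birationality follows.

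With birationality in hand, each part is short. Part (1) is essentially formal: the map is induced by extension of scalars along $R \hookrightarrow Z(E) \hookrightarrow E$, it is $Z(E)$-linear for the $Z(E)$-structure coming from (say) the left factor of the source, and it is surjective on elementary tensors. For part (2), surjectivity is the same, and the kernel is generated by elements $za \otimes b - a \otimes zb$ with $z \in Z(E)$, $a \in A$, $b \in B$; choosing $w \in W$ with $wz = s \in R$ we get
\[
w \cdot (za \otimes b - a \otimes zb) \;=\; sa \otimes b - a \otimes sb \;=\; 0 \quad \text{in } A \otimes_R B,
\]
so each generator is $W$-torsion and the kernel is torsion. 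For part (3), the inclusion $\Hom_{Z(E)}(A,B) \subseteq \Hom_R(A,B)$ is tautological; conversely, given $\phi \in \Hom_R(A,B)$ and $z \in Z(E)$, pick $w \in W$ with $wz = s \in R$ and compute
\[
w\,\phi(za) \;=\; \phi(sa) \;=\; s\,\phi(a) \;=\; w \cdot z\,\phi(a)
\]
by $R$-linearity of $\phi$, whence torsion-freeness of $B$ permits cancellation of $w$, giving $\phi(za) = z\phi(a)$.

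The main obstacle is proving the birationality lemma, and in particular the identity $Z(\End_K(N)) = K$ over the Artinian ring $K$ for faithful $N$. This is immediate for self-injective $K$ (where a faithful module is a generator, so Morita applies) but requires slightly finer attention in the general Artinian case; for the present paper, the trace-ideal approach, which dovetails with the authors' broader framework built on Lindo's work, is likely the cleanest route.
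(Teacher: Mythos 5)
Your reductions of (1)--(3) to the single statement that $Z(E)$ is a birational extension of $R$ are correct, and given that statement your arguments coincide with (or reprove) the paper's: the paper deduces that $\ker(p)$ is torsion by localizing at the associated primes, where $Z(E)_{\p}\cong R_{\p}$, and it quotes \cite[Proposition 4.14]{Lw12} for (3), which is exactly your cancellation computation with a non-zerodivisor $w$. The gap is in your proposed proof of the birationality lemma itself, which the paper does not prove but imports from \cite[Corollary 3.8]{Li17}.

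The identity you invoke in your first route, namely $Z(\End_K(N))=K$ for every finitely generated \emph{faithful} module $N$ over an Artinian ring $K$, is false: torsionlessness cannot be dropped in the reduction. For a counterexample take $K=k[x,y,z]/(x,y,z)^2$ and $N=K/(x)\oplus K/(y)$. Then $N$ is faithful since $(x)\cap(y)=0$, but every homomorphism between $K/(x)$ and $K/(y)$ is multiplication by an element of the maximal ideal, and the maximal ideal squares to zero; hence the diagonal endomorphism which is multiplication by $\bar z$ on $K/(x)$ and $0$ on $K/(y)$ commutes with all of $\End_K(N)$, yet it is not in the image of $K$ (no $c\in K$ satisfies $c\equiv z$ mod $(x)$ and $c\equiv 0$ mod $(y)$). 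So $Z(\End_K(N))\supsetneq K$; note $N$ is not torsionless. In addition, $K=W^{-1}R$ need not be Artinian when $R$ has embedded primes (e.g.\ $R=k[[x,y]]/(x^2,xy)$ equals its own total quotient ring and has dimension one), so the Krull--Schmidt/Artinian-local reduction is not available in general. Your second route is essentially a sketch of Lindo's proof, but the one nontrivial step---that $Z(E)$ embeds in $\End_R(\tr_R(M))$---is precisely the content of \cite[Corollary 3.8]{Li17} and is not supplied; moreover the parenthetical justification is misstated: a prime $\p\in\Ass(R)$ containing $\tr_R(M)$ does not annihilate $M$. The correct use of torsionlessness is that $M$ embeds in $\tr_R(M)^{\oplus n}\subseteq\p^{\oplus n}$, so after localizing at $\p$ (where the maximal ideal is associated) a nonzero element $s$ with $\Ann(s)=\p R_\p$ kills $M_\p$, contradicting faithfulness; this yields the non-zerodivisor in $\tr_R(M)$. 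If you simply cite Lindo's corollary at this point, as the paper does, the remainder of your argument is complete and matches the paper's proof.
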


\begin{proof}

Since $M$ is faithful, $R$ naturally embeds as a subring of both $Z(E)$ and $E$, so the maps $A \otimes_R B \to A \otimes_E B$ and $A \otimes_R B \to A \otimes_{Z(E)} B$ are surjective. As $M$ is torsionless and faithful, it follows from \cite[Corollary 3.8]{Li17} that $Z(E)$ is a birational extension of $R$. In particular, $Z(E)_{\p} \cong R_{\p}$ for all $\p \in \Ass(R)$. Thus $p$ is locally a surjective endomorphism and thus isomorphism at every associated prime of $R$, from which we see that $\ker(p)$ is torsion. 

For $(3)$, we note as above that $Z(E)$ is a birational extension of $R$. Then the claim follows immediately from \cite[Proposition 4.14]{Lw12}.

\end{proof}


The next proposition compares canonical duality of $M$ over $R$ versus that over $Z(E)$.

\begin{prop}\label{samedual}
Suppose $R$ is a Cohen-Macaulay local ring of dimension $1$ with canonical module $\w_R$. Suppose $M$ is a torsion-free $R$-module, and suppose that either $M$ is generically free or that $R$ is generically Gorenstein. Then $M^{\vee} \cong \Hom_{Z(E)}(M,\w_{Z(E)})$ as $Z(E)$-modules.
\end{prop}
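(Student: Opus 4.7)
The plan is to identify $\w_{Z(E)}$ with $\Hom_R(Z(E),\w_R)$ and then invoke the standard tensor-Hom (coinduction) adjunction along $R \to Z(E)$. Under either hypothesis, I would first verify that $M$ is torsionless and faithful, so that the results recalled in Proposition~\ref{RvsSprop} apply: if $M$ is generically free of positive rank $n$, the inclusion $M \hookrightarrow M \otimes_R Q \cong Q^n$ (with $Q$ the total quotient ring of $R$) yields $M \hookrightarrow R^n$ after clearing a common denominator by an NZD, and faithfulness is automatic from the positivity of rank; if $R$ is generically Gorenstein, $Q$ is Gorenstein Artinian and hence self-injective, so $M \otimes_R Q$ still embeds in some $Q^N$ and the same clearing-denominators argument gives torsionlessness. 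In either case \cite[Corollary 3.8]{Li17} identifies $Z(E)$ with a module-finite birational extension of $R$ sitting inside $Q$, which is what we will need.

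Since $Z(E) \supseteq R$ and $Z(E) \subseteq Q$, any NZD of $R$ remains an NZD on $Z(E)$, forcing $\depth_R Z(E) \geq 1 = \dim Z(E)$. Hence $Z(E)$ is Cohen-Macaulay of dimension one, and by the standard change-of-rings formula for canonical modules over a module-finite Cohen-Macaulay extension,
\[
\w_{Z(E)} \cong \Hom_R(Z(E),\w_R)
\]
as $Z(E)$-modules. The coinduction adjunction along $R \to Z(E)$ applied to the $Z(E)$-module $M$ then produces
\[
\Hom_{Z(E)}(M,\w_{Z(E)}) \cong \Hom_{Z(E)}\bigl(M,\Hom_R(Z(E),\w_R)\bigr) \cong \Hom_R(M,\w_R) = M^{\vee},
\]
given explicitly by $\phi \mapsto \bigl(m \mapsto \phi(m)(1)\bigr)$.

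The remaining point is that this isomorphism should be $Z(E)$-linear. On the right, $M^{\vee} = \Hom_R(M,\w_R)$ carries a $Z(E)$-action inherited from the left $E$-action on $M$ via $(s\cdot\psi)(m) := \psi(sm)$ for $s \in Z(E)$, while on the left the action comes through $\w_{Z(E)}$, and on $\Hom_R(Z(E),\w_R)$ it is given by $(s\cdot\alpha)(t) = \alpha(st)$. A short check using the $Z(E)$-linearity of $\phi$ shows the explicit formula above intertwines the two actions. I expect the only real friction to be bookkeeping through these various module structures, and, in the generically Gorenstein case, the small additional step of verifying (or reducing to) faithfulness; the adjunction itself is formal once the formula for $\w_{Z(E)}$ is in hand.
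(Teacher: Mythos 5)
Your proof is correct, but it takes a genuinely different route from the paper's. The paper's argument identifies $\w_{Z(E)}$ with the trace submodule $\tr_{\w}(M)\subseteq \w_R$ via \cite[Corollary 3.6]{Ly23}, and then concludes immediately from Proposition \ref{basictraceprop}(2) (which gives $\Hom_R(M,\tr_{\w}(M))\cong \Hom_R(M,\w_R)$) together with Proposition \ref{RvsSprop}(3) (which lets one replace $\Hom_R$ by $\Hom_{Z(E)}$, since $Z(E)$ is a birational extension of $R$ and the target is torsion-free). You instead identify $\w_{Z(E)}$ with the coinduced module $\Hom_R(Z(E),\w_R)$ --- after checking that $Z(E)$ is a module-finite, one-dimensional Cohen--Macaulay extension --- and finish with the formal tensor-Hom adjunction, verifying $Z(E)$-linearity by hand; both identifications of $\w_{Z(E)}$ are standard and of course compatible. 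What the paper's route buys is brevity given the preliminaries already in place, staying entirely inside $\w_R$; what your route buys is that the adjunction step is hypothesis-free, so the dichotomy ``$M$ generically free or $R$ generically Gorenstein'' (and faithfulness) enters your argument only through \cite[Corollary 3.8]{Li17}, i.e., only to see that $Z(E)$ is Cohen--Macaulay of dimension one --- and even that can be obtained without faithfulness, since $Z(E)\subseteq \End_R(M)$ embeds in $M^{\oplus n}$ and is therefore torsion-free over $R$, so your argument is in this sense more economical. Two small points to tighten: the claim that ``faithfulness is automatic from the positivity of rank'' presupposes positive constant rank, which is not among the hypotheses (the paper's own proof makes the same implicit assumption through Proposition \ref{RvsSprop}); and the change-of-rings formula $\w_{Z(E)}\cong\Hom_R(Z(E),\w_R)$ should be invoked for the semilocal, not necessarily local, ring $Z(E)$, where the canonical module is defined via localizations at its maximal ideals --- this is standard (e.g.\ by completing and applying the local statement to each factor) but deserves an explicit reference.
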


\begin{proof}
It follows from \cite[Corollary 3.6]{Ly23} that $\w_{Z(E)} \cong \tr_{\w}(M)$. The claim then follows from combining Proposition \ref{basictraceprop} (2) Proposition \ref{RvsSprop} (3).
\end{proof}

Proposition \ref{samedual} implies there is no ambiguity in its setting between canonical dual of $M$ viewed as an $R$-module or viewed as a $Z(E)$-module. We will thus identify them in this setting for the rest of this work without further comment.



\section{A Partial Converse to A Result of Lindo}\label{lindoconversesection}

We will need a very general version of a duality map for the purposes of this section: If $\Lambda$ is a possibly noncommutative ring and $M,L,N$ are left $\Lambda$-modules, we let \[\beta_{L,N}^{M,\Lambda}:\Hom_{\Lambda}(L,N) \to \Hom_{\End_\Lambda(M)}(\Hom_{\Lambda}(M,L),\Hom_{\Lambda}(M,N))\] be the map of $\End_R(N)-\End_R(L)$ bimodules given by $f \mapsto \Hom(M,f)$.

We will make use of the following well-known lemma:

\begin{lem}\label{quicklem}
If $\Lambda$ is a possibly noncommutative ring, then for any left $\Lambda$-modules $L,N$, the map $\beta^{L,\Lambda}_{L,N}$ is an isomorphism.
\end{lem}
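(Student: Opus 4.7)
The plan is to recognize this as a standard Yoneda-type statement: for any ring $S$ and right $S$-module $A$, the evaluation-at-$1$ map $\Hom_S(S,A) \to A$ is an isomorphism. Here one takes $S = \End_\Lambda(L)$ and $A = \Hom_\Lambda(L,N)$, where $A$ is a right $S$-module via precomposition. Concretely, with $M = L$ the map in question has the form
\[
\beta^{L,\Lambda}_{L,N}\colon \Hom_\Lambda(L,N) \longrightarrow \Hom_{\End_\Lambda(L)}(\End_\Lambda(L),\, \Hom_\Lambda(L,N)),\qquad f \mapsto \bigl(g \mapsto f\circ g\bigr).
\]

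I would construct the candidate two-sided inverse explicitly as the evaluation-at-identity map $\epsilon(\phi) := \phi(\id_L)$, and then verify the two composites are the identity. One direction is automatic: $\epsilon(\beta(f)) = (g \mapsto f\circ g)(\id_L) = f$. For the reverse composite, given a right $\End_\Lambda(L)$-linear map $\phi$ and an arbitrary $g \in \End_\Lambda(L)$, applying right $\End_\Lambda(L)$-linearity with $h = \id_L$ yields
\[
\phi(g) \;=\; \phi(\id_L \circ g) \;=\; \phi(\id_L)\circ g \;=\; \beta(\epsilon(\phi))(g),
\]
so $\beta \circ \epsilon = \mathrm{id}$ as well.

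There is no substantive obstacle; the only point requiring attention is bookkeeping of module sides. One must observe that in the codomain of $\beta$, the action of $\End_\Lambda(L)$ both on $\End_\Lambda(L)$ itself and on $\Hom_\Lambda(L,N)$ is by composition on the right, so that the $\End_\Lambda(L)$-linearity condition takes precisely the form $\phi(h\circ g) = \phi(h)\circ g$ needed to run the identification above.
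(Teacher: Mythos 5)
Your proof is correct and is essentially the paper's own argument: the paper likewise takes evaluation at $1_{\End_\Lambda(L)}$ as the explicit inverse and verifies both composites, using exactly the linearity identity $\phi(1_{\End_\Lambda(L)}\circ g)=\phi(1_{\End_\Lambda(L)})\circ g$ that you isolate. No differences worth noting.
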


\begin{proof}

Define $\gamma:\Hom_{\End_{\Lambda}(L)}(\End_{\Lambda}(L),\Hom_{\Lambda}(L,N)) \to \Hom_{\Lambda}(L,N)$ by $g \mapsto g(1_{\End_{\Lambda}(L)})$. We observe for any $f \in \Hom_{\End_{\Lambda}(L)}(\End_{\Lambda}(L),\Hom_{\Lambda}(L,N))$ that $\beta^{L,\Lambda}_{L,N}(\gamma(f))=\Hom_{\Lambda}(L,f(1_{\End_{\Lambda}(L)}))$. But if $x \in \End_{\Lambda}(L)$, then $\Hom_{\Lambda}(L,f(1_{\End_{\Lambda}(L)}))(x)=f(1_{\End_{\Lambda}(L)}) \circ x=f(1_{\End_{\Lambda}(L)} \circ x)=f(x)$. That is to say, $\Hom_{\Lambda}(L,f(1_{\End_{\Lambda}(L)}))=f$, so $\beta^{L,\Lambda}_{L,N} \circ \gamma=\id_{\Hom_{\End_{\Lambda}(L)}(\End_{\Lambda}(L),\Hom_{\Lambda}(L,N))}$. 

Similarly, for any $g \in \Hom_{\Lambda}(L,N)$, we have $\gamma(\beta^{L,\Lambda}_{L,N}(g))=\gamma(\Hom_{\Lambda}(L,g))=\Hom_{\Lambda}(L,g)(1_{\End_{\Lambda}(L)})=g \circ 1_{\End_{\Lambda}(L)}=g$. Thus $\gamma \circ \beta^{L,\Lambda}_{L,N}=\id_{\Hom_{\Lambda}(L,N)}$. So $\beta^{L,\Lambda}_{L,N}$ and $\gamma$ are inverses, and in particular $\beta^{L,\Lambda}_{L,N}$ is an isomorphism.

\end{proof}

The following gives a partial converse to Theorem \ref{Lindointro}:

\begin{theorem}\label{Lindoconverse}

Let $M$ be an $R$-module.

\begin{enumerate}

\item[$(1)$] Suppose $M$ has full support. 
If $\Ext^1_R(M,M)=0$ and $E \cong \Hom_R(M,M^{**})$ as right $E$-modules, then $M$ is reflexive. 

\item[$(2)$]  Suppose $M$ is torsion-free and is free of full support on the punctured spectrum of $R$. Further suppose that $\End_R(M)$ is local, e.g. $R$ is Henselian and $M$ is indecomposable. If $\Ext^1_E(M,M)=0$ and $Z(E) \cong Z(\End_R(M^*))$ as $Z(E)$-modules, then $M$ is reflexive.

\end{enumerate}

\end{theorem}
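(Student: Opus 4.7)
The plan is to derive reflexivity in each part by analyzing the biduality map together with Ext-vanishing and a full-support / finite-length argument.

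For part (1), the hypothesis means $\Hom_R(M, M^{**})$ is a free right $E$-module of rank one, so pick a generator $\phi_0 \in \Hom_R(M, M^{**})$; then $\Psi: E \to \Hom_R(M, M^{**})$, $a \mapsto \phi_0 \circ a$, is a right $E$-module isomorphism. Injectivity of $\Psi$ forces $\Hom_R(M, \ker \phi_0) = 0$ (any $f \in E$ with $f(M) \subseteq \ker \phi_0$ satisfies $\phi_0 \circ f = 0$); combined with full support, which via Nakayama gives $\Hom_R(M, R/\p) \ne 0$ at every $\p \in \Spec R$, this rules out associated primes of $\ker \phi_0$, forcing $\ker \phi_0 = 0$. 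Applying $\Hom_R(M,-)$ to $0 \to M \xrightarrow{\phi_0} M^{**} \to C_0 \to 0$ and using $\Ext^1_R(M,M) = 0$ together with the bijectivity of $\Psi$ yields $\Hom_R(M, C_0) = 0$, and the same full-support argument then gives $C_0 = 0$. So $\phi_0: M \cong M^{**}$. To upgrade this to $\delta_M$ being an isomorphism, I would dualize to get $\phi_0^*: M^{***} \cong M^*$, invoke the canonical splitting $M^{***} = \delta_{M^*}(M^*) \oplus \ker \delta_M^*$ (from $\delta_M^* \circ \delta_{M^*} = \id_{M^*}$), and apply Nakayama cancellation ($N \cong N \oplus L \Rightarrow L = 0$) for finitely generated modules over a Noetherian local ring to conclude $\ker \delta_M^* = 0$; then $\delta_M^*$ is an isomorphism, hence so is $\delta_{M^*}$, i.e.\ $M^*$ is reflexive, hence $\delta_{M^{**}}$ is an isomorphism (dualizing once more), and the naturality square $\delta_{M^{**}} \circ \phi_0 = \phi_0^{**} \circ \delta_M$ forces $\delta_M$ to be an isomorphism.

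For part (2), both $Z(E)$ and $Z(\End_R(M^*))$ are birational $R$-subalgebras of the total quotient ring of $R$ by \Cref{RvsSprop} applied to the torsionless faithful modules $M$ and $M^*$ (torsionlessness and faithfulness follow from $M$ being torsion-free and locally free of positive rank on the punctured spectrum). A $Z(E)$-module isomorphism between two such $R$-subrings both containing $R$ is necessarily multiplication by a unit, and closure of the target under multiplication pins this unit to $Z(E) \cap Z(\End_R(M^*))$, forcing $Z(E) = Z(\End_R(M^*))$ as subrings of the total quotient ring. Since $M$ is torsionless, $\delta_M$ is $E$-linear and injective, with finite-length cokernel $C$ (it is an isomorphism on the punctured spectrum). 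Applying $\Hom_E(M,-)$ to $0 \to M \to M^{**} \to C \to 0$ and using $\Ext^1_E(M,M) = 0$ produces the exact sequence $0 \to Z(E) \to \Hom_E(M, M^{**}) \to \Hom_E(M, C) \to 0$. Under the natural isomorphism $\Hom_R(M, M^{**}) \cong \End_R(M^*)$, the submodule $\Hom_E(M, M^{**})$ corresponds to the centralizer in $\End_R(M^*)$ of the image of $(-)^*: E \to \End_R(M^*)^{op}$; on the punctured spectrum this image is all of $\End_R(M^*)$, so the centralizer agrees with the center there, and the plan is to combine this with the equality $Z(E) = Z(\End_R(M^*))$ plus a local analysis at $\m$ to deduce $\Hom_E(M, M^{**}) = Z(E)$ globally. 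Then $\Hom_E(M, C) = 0$. Since $E$ is local and $M \ne 0$ surjects onto the residue division ring $E/\Rad(E)$, any simple $E$-submodule of $C$ would receive a nonzero $E$-map from $M$; hence $C = 0$ and $\delta_M$ is an isomorphism.

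The main obstacles are: in (1), the cancellation-plus-dualization chain that upgrades the abstract isomorphism $M \cong M^{**}$ to $\delta_M$ being an isomorphism (the Nakayama cancellation $N \cong N \oplus L \Rightarrow L = 0$ over Noetherian local rings being the key input); in (2), identifying $\Hom_E(M, M^{**})$ with $Z(\End_R(M^*))$ globally, i.e.\ controlling the centralizer of the image of $(-)^*$ inside $\End_R(M^*)$ at the closed point of $\Spec R$. The locally-free-on-the-punctured-spectrum hypothesis should give central behavior outside $\m$, and the locality of $E$ is what finally turns the vanishing $\Hom_E(M, C) = 0$ into $C = 0$.
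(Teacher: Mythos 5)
Your part (1) is correct and is essentially the paper's argument: you realize the right-$E$-module isomorphism as postcomposition with a single $R$-linear map $\phi_0\colon M\to M^{**}$ (the paper extracts this from Lemma \ref{quicklem}), and then kill kernel and cokernel using full support together with $\Ext^1_R(M,M)=0$. Your additional step upgrading the abstract isomorphism $M\cong M^{**}$ to bijectivity of $\delta_M$, via the splitting $(\delta_M)^*\circ\delta_{M^*}=\id_{M^*}$, cancellation of finitely generated modules, and naturality, is sound, and in fact makes explicit a point the paper leaves implicit.

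Part (2), however, has a genuine gap at its central step. Your argument hinges on the exact sequence $0\to Z(E)\to\Hom_E(M,M^{**})\to\Hom_E(M,C)\to 0$ obtained by applying $\Hom_E(M,-)$ to $0\to M\xrightarrow{\delta_M}M^{**}\to C\to 0$, and to conclude $\Hom_E(M,C)=0$ you must show that postcomposition with $\delta_M$ maps $Z(E)=\End_E(M)$ \emph{onto} $\Hom_E(M,M^{**})$, equivalently that the centralizer of the image of $E$ in $\End_R(M^*)$ is exactly the image of $Z(E)$. You verify this only on the punctured spectrum, where $M$ is free, and describe the step at $\m$ as a ``plan''; but the entire difficulty is concentrated at $\m$: the discrepancy module is of finite length and nothing in your sketch forces it to vanish. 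Note also that an abstract $Z(E)$-module isomorphism $Z(E)\cong\Hom_E(M,M^{**})$ would not by itself make your particular map surjective: if $\Hom_E(M,M^{**})$ is cyclic over $Z(E)$ with generator $\theta_0$, then $\delta_M=z_0\theta_0$ for some $z_0\in Z(E)$, and surjectivity of postcomposition with $\delta_M$ requires $z_0$ to be a unit, which is not automatic. The paper sidesteps this issue entirely: it converts the hypothesis into an abstract $Z(E)$-isomorphism $Z(E)\cong\Hom_E(M,M^{**})$ through the chain $Z(\End_R(M^*))\cong\End_R(\tr_R(M))$ (\cite[Theorem 3.21]{Li17}), $\End_R(\tr_R(M))\cong\End_E(M^*)$ (\cite[Theorem 3.3 (1)]{Ly23}), and Hom-tensor adjointness, then invokes Lemma \ref{quicklem} to realize this isomorphism as $\Hom_E(M,\theta)$ for some $E$-linear $\theta\colon M\to M^{**}$ which need not be $\delta_M$, and proves $\theta$ is an isomorphism by the same finite-length and local-$E$ arguments you use at the end (an arbitrary isomorphism $M\cong M^{**}$ then yields reflexivity, exactly as in your part (1)). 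To repair your version you would either have to prove the centralizer statement at the closed point, which amounts to reproving the two cited results, or switch, as the paper does, to an unspecified $\theta$ furnished by Lemma \ref{quicklem} rather than insisting on $\delta_M$.
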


\begin{proof}

We begin with $(1)$. Suppose $E$ and $\Hom_R(M,M^{**})$ are isomorphic as right $E$-modules, i.e., that there is an isomorphism $\gamma \in \Hom_E(E,\Hom_R(M,M^{**})$ By Lemma \ref{quicklem}, $\beta^{M,R}_{M,M^{**}}$ is an isomorphism, so there is a map of $R$-modules $\alpha:M \to M^{**}$ so that $\gamma=\Hom_R(M,\alpha)$.

Applying $\Hom_R(M,-)$ to the exact sequence 
\[0 \rightarrow \ker(\alpha) \rightarrow M \xrightarrow{\alpha} M^{**}\]
we get an exact sequence 
\[0 \rightarrow \Hom_R(M,\ker(\alpha)) \rightarrow E \xrightarrow{\gamma} \Hom_R(M,M^{**}).\]
Since $\gamma$ is an isomorphism, it follows that $\Hom_R(M,\ker(\alpha))=0$. Thus there is a $\ker(\alpha)$ regular element $x \in \Ann_R(M)$. But $\ker(\alpha) \hookrightarrow M$, so if $xM=0$, then $x\ker(\alpha)=0$ as well. It follows that $\ker(\alpha)=0$. 

Then applying $\Hom_R(M,-)$ to the exact sequence
\[0 \rightarrow M \xrightarrow{\alpha} M^{**} \rightarrow \coker(\alpha) \rightarrow 0\]
we obtain, since $\Ext^1_R(M,M)=0$, an exact sequence 
\[0 \rightarrow E \xrightarrow{\gamma} \Hom_R(M,M^{**}) \rightarrow \Hom_R(M,\coker(\alpha)) \rightarrow 0.\]
Since $\gamma$ is an isomorphism, we have $\Hom_R(M,\coker(\alpha))=0$. But then $\Ass_R \Hom_R(M,\coker(\alpha))=\Supp_R(M) \cap \Ass_R(\coker(\alpha))=\Spec(R) \cap \Ass_R(\coker(\alpha))=\Ass(\coker(\alpha))=\varnothing$, which implies $\coker(\alpha)=0$. 
Therefore, $\alpha$ is an isomorphism, so $M$ is reflexive.

For $(2)$, we follow a similar approach as for $(1$). We first note that is follows from \cite[Theorem 3.21]{Li17} that $Z(\End_R(M^*)) \cong \End_R(\tr_R(M))$, while \cite[Theorem 3.3 (1)]{Ly23} 
implies that $\End_R(\tr_R(M)) \cong \End_{E}(M^*)$. From Hom-tensor adjointness we see that $\End_{E}(M^*) \cong \Hom_E(M,M^{**})$. So there is an isomorphism of $Z(E)$-modules, $\eta:Z(E) \to \Hom_E(M,M^{**})$. By Lemma \ref{quicklem}, $\beta^{M,E}_{M,N}$ is an isomorphism. Thus there is a map $\theta:M \to M^{**}$ of left $E$-modules so that $\eta=\Hom_E(M,\theta)$. Applying $\Hom_E(M,-)$ to the exact sequence
\[0 \rightarrow \ker(\theta) \rightarrow M \xrightarrow{\theta} M^{**},\]
we get an exact sequence
\[0 \rightarrow \Hom_E(M,\ker(\theta)) \rightarrow Z(E) \xrightarrow{\eta} \Hom_E(M,M^{**}).\]
As $\eta$ is an isomorphism, it follows that $\Hom_E(M,\ker(\theta))=0$. As $M$ is free on the punctured spectrum of $R$, localizing at any $\p \in \Spec(R)-\{\m\}$, we have $M_{\p} \cong R_{\p}^{\oplus r_{\p}}$, and we see that 
\[\Hom_E(M,\ker(\theta))_{\p} \cong \Hom_{E_{\p}}(M_{\p},\ker(\theta)_{\p}) \cong \Hom_{M_{r_{\p}}(R_{\p})}(R_{\p}^{\oplus r_{\p}},\ker(\theta)_{\p})=0.\]
As there is an isomorphism of left $M_{r_{\p}}(R_{\p})$-modules $M_{r_{\p}}(R_{\p}) \cong (R_{\p}^{\oplus r_{\p}})^{\oplus r_{\p}}$, it follows that $\ker(\theta)_{\p}=0$. It follows that $\ker(\theta)$ is an $R$-module of finite length, but as $M$ is torsion-free, it must be that $\ker(\theta)=0$. Applying $\Hom_E(M,-)$ to the exact sequence 
\[0 \rightarrow M \xrightarrow{\theta} M^{**} \rightarrow \coker(\theta) \rightarrow 0\]
we get an exact sequence 
\[0 \rightarrow Z(E) \xrightarrow{\eta} \Hom_E(M,M^{**}) \rightarrow \Hom_E(M,\coker(\theta)) \rightarrow 0.\]
As $\eta$ is an isomorphism, $\Hom_E(M,\coker(\theta))=0$. By a similar argument as above, we obtain that $\coker(\theta)$ is an $R$-module of finite length. Any ascending chain $C_1 \subseteq C_2 \subseteq \cdots \subseteq C_n$ of $E$-submodules $\coker(\theta)$ is also an ascending chain of $R$-modules, and as $\coker(\theta)$ has finite length as an $R$-modules, it follows that $\coker(\theta)$ also has finite length as a left $E$-module. Suppose $\coker(\alpha) \ne 0$ so that its $E$-module length is nonzero. Then as $\coker(\theta)$ has a finite composition series, it must contain a simple left $E$-module. But as $E$ is local, the only simple left $E$-module is $E/J(E)$, so there is an embedding $i:E/J(E) \hookrightarrow \coker(\theta)$. Moreover, as $E/J(E)$ is a division ring, $M/J(E)E$ is a free $E/J(E)$-module and so we take an $E$-linear projection $q:M/J(E)E \twoheadrightarrow E/J(E)$. Letting $p:M \to M/J(E)M$ be the natural projection of left $E$-modules, we see that $i \circ q \circ p$ is a nonzero element of $\Hom_E(M,\coker(\alpha))$. It follows that $\coker(\alpha)=0$. Therefore, $\theta$ is an isomorphism so $M$ is reflexive.


\end{proof}

\begin{remark}\label{exampleEndlocal}

The hypothesis that $\End_R(M)$ be local in Theorem \ref{Lindoconverse} (2) cannot be removed. Indeed, take for example $R$ to to be a non-Gorenstein domain of dimension $1$ with canonical module $\w_R$, e.g. one can take $R=k[\![t^3,t^4,t^5]\!]$. Set $M:=\w_R \oplus R$. Then $M$ is a projective left $E$-module, so $\Ext^1_E(M,M)=0$. Note also that $M$ is torsionless and free on the punctured spectrum since $R$ is a domain of dimension $1$. We have from Proposition \ref{basictraceprop} (3) that $R=Z(E) \cong Z(\End_R(M^*))=R$ since $M$ (and thus $M^*$) is a generator, but $M$ is not reflexive as $R$ is not Gorenstein. 
\end{remark}

\section{When is $Z(E) \cong R$?}\label{rigidsection}

In this section we provide several critera under which $Z(E) \cong R$. Using these, we provide partial results of several flavors towards the Huneke-Wiegand conjecture. Throughout this section, we let $R$ be a Cohen-Macaulay local ring of dimension $1$, and we let $M$ be a finitely generated $R$-module with constant rank. For ease of notation, we let $(-)^{\dagger \dagger}_E:=\Hom_E(\Hom_R(-,M),M)$.

We begin with the following result that describes how to calculate $\Omega^1_{Z(E)}(M)$ when $M$ is a rigid module.

\begin{prop}\label{firstprop}

Suppose $\Ext^1_R(M,M)=0$. Then 
\begin{enumerate}
\item[$(1)$] $\Omega^1_{Z(E)}(M) \cong (\Omega^1_R(M))^{\dagger \dagger}$ up to free $Z(E)$-summands, and
\item[$(2)$] $\Ext^1_{Z(E)}(M,M)=0$.
\end{enumerate}    
\end{prop}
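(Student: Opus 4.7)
The plan is to produce an explicit short exact sequence of $Z(E)$-modules realizing $M$ as a quotient of $Z(E)^n$ with kernel $(\Omega^1_R(M))^{\dagger\dagger}_E$, from which both parts follow. I would start with a free $R$-presentation $R^n \xrightarrow{p} M \to 0$ given by generators $x_i := p(e_i)$, apply $\Hom_R(-,M)$, and use $\Ext^1_R(M,M) = 0$ to obtain a short exact sequence of left $E$-modules
\[0 \to E \to M^n \to K \to 0, \qquad K := \Hom_R(\Omega^1_R(M), M).\]
Applying $\Hom_E(-,M)$ and making the standard identifications $\Hom_E(M,M) = Z(E)$, $\Hom_E(M^n,M) = Z(E)^n$, and $\Hom_E(E,M) = M$ (evaluation at $\id_M$), I obtain the long exact sequence
\[0 \to (\Omega^1_R(M))^{\dagger\dagger}_E \to Z(E)^n \xrightarrow{\pi} M \to \Ext^1_E(K, M) \to \cdots,\]
where a direct check shows $\pi(z_1, \ldots, z_n) = \sum z_i(x_i)$. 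Since the $x_i$ generate $M$ over $R$ and $R$ acts on $M$ through the canonical ring map $R \to Z(E)$, the image of $\pi$ contains $\sum R\,x_i = M$, so $\pi$ is surjective. This yields a short exact sequence of $Z(E)$-modules
\[0 \to (\Omega^1_R(M))^{\dagger\dagger}_E \to Z(E)^n \to M \to 0, \qquad (\ddagger)\]
and part $(1)$ follows at once from Schanuel's lemma over $Z(E)$.

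For part $(2)$, I would apply $\Hom_R(-, M)$ to $(\ddagger)$ viewed as an $R$-sequence. Using $\Ext^1_R(M, M) = 0$ again gives surjectivity on the right, and Proposition \ref{RvsSprop}(3) identifies each Hom with its $Z(E)$-counterpart, producing
\[0 \to E \to M^n \to \Hom_{Z(E)}((\Omega^1_R(M))^{\dagger\dagger}_E, M) \to 0.\]
Applying $\Hom_{Z(E)}(-, M)$ to $(\ddagger)$ directly gives instead the four-term sequence
\[0 \to E \to M^n \to \Hom_{Z(E)}((\Omega^1_R(M))^{\dagger\dagger}_E, M) \to \Ext^1_{Z(E)}(M, M) \to 0,\]
with the same first three maps. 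Surjectivity of the third map (established via the first sequence) therefore forces $\Ext^1_{Z(E)}(M, M) = 0$.

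The main obstacle is the appeal to Proposition \ref{RvsSprop}(3), which requires $M$ to be torsionless and the target $M$ to be torsion-free as a $Z(E)$-module. Under the section's standing hypothesis that $M$ has constant positive rank over a $1$-dimensional Cohen-Macaulay ring, torsion-freeness of $M$ over $R$ is equivalent to being torsionless, and since $Z(E)$ is a birational extension of $R$ in that case, torsion-freeness transfers to the $Z(E)$-module structure. Provided $M$ is torsion-free as an $R$-module, the remainder of the argument is entirely formal.
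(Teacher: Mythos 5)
Your proposal is correct, and part $(1)$ is essentially the paper's argument in different packaging: the paper applies $\Hom_R(-,M)$ and then $\Hom_E(-,M)$ to a minimal free presentation and compares the result to the presentation $Z(E)^{\oplus n}\to M$ via a commutative diagram whose vertical maps $\beta^{R^{\oplus n}}_M$ and $\beta^M_M$ are isomorphisms, whereas you reach the same short exact sequence $0\to(\Omega^1_R(M))^{\dagger\dagger}\to Z(E)^{\oplus n}\to M\to 0$ by writing out the evaluation map $\pi(z_1,\dots,z_n)=\sum z_i(x_i)$ and checking surjectivity directly; Schanuel then gives the statement up to free $Z(E)$-summands, exactly as the paper's non-minimal presentation does.

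For part $(2)$ your route is genuinely different. The paper does not return to the presentation at all: it invokes the torsion-theoretic rigidity criterion of \cite[Lemma 5.3]{DE21} to convert $\Ext^1_R(M,M)=0$ into torsion-freeness of $M^{\vee}\otimes_R M$, identifies $M^{\vee}\otimes_R M$ with $\Hom_{Z(E)}(M,\w_{Z(E)})\otimes_{Z(E)}M$ using birationality, \cite[Corollary 3.6]{Ly23} (so $\w_{Z(E)}\cong\tr_{\w}(M)$), and Proposition \ref{samedual}, and then applies the same criterion over $Z(E)$ in reverse. You instead argue directly from the sequence $(\ddagger)$: every $Z(E)$-linear map $(\Omega^1_R(M))^{\dagger\dagger}\to M$ is $R$-linear, lifts to $Z(E)^{\oplus n}$ because the obstruction lies in $\Ext^1_R(M,M)=0$, and the lift is automatically $Z(E)$-linear by Proposition \ref{RvsSprop}$(3)$, so the connecting map into $\Ext^1_{Z(E)}(M,M)$ is zero. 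Your argument is more self-contained and slightly more general in that it never uses the canonical module or the Huneke--Wiegand-type tensor criterion; what it costs is the hypotheses behind Proposition \ref{RvsSprop}$(3)$ (namely that $M$ is torsionless, faithful, and torsion-free so that $Z(E)$ is birational over $R$ and the two Hom-sets coincide), which you correctly flag and which the paper's own proof of $(2)$ also uses tacitly, through birationality, \cite[Lemma 5.3]{DE21}, and Proposition \ref{samedual}. So the two proofs stand on comparable hypotheses, and yours trades the canonical-module machinery for the lifting argument.
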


\begin{proof}

Let $x_1,\dots,x_n$ be a minimal $R$-generating set for $M$ and let $p:R^{\oplus n} \to M$ be given on standard basis vectors by $p(e_i)=x_i$. Applying $\Hom_R(-,M)$ to the short exact sequence \[0 \rightarrow \Omega^1_R(M) \rightarrow R^{\oplus n} \xrightarrow{p} M \rightarrow 0\]
we obtain a short exact sequence
\[0 \rightarrow E \xrightarrow{\Hom_R(p,M)} \Hom_R(R^{\oplus n},M) \rightarrow \Hom_R(\Omega^1_R(M),M) \rightarrow 0\]
Applying $\Hom_E(-,M)$ to this exact sequence, we obtain an exact sequence
\[0 \rightarrow (\Omega^1_R(M))^{\dagger \dagger} \rightarrow (R^{\oplus n})^{\dagger \dagger} \xrightarrow{p^{\dagger \dagger}} \Hom_E(E,M).\]
Noting that $x_1,\dots,x_n$ is a $Z(E)$-generating set for $M$, we have a surjection $q:Z(E)^{\oplus n} \to M$ given on standard basis vectors by $q(e_i)=x_i$. 
Moreover, there is a commutative diagram:

\[\begin{tikzcd}[cramped,row sep=3.15em,yshift=.3ex]
	0 & {\ker(q)} & {Z(E)^{\oplus n}} & M & 0 \\
	0 & {(\Omega^1_R(M))^{\dagger \dagger}} & {(R^{\oplus n})^{\dagger \dagger}} & {\Hom_E(E,M)}
	\arrow[from=1-1, to=1-2]
	\arrow[from=1-2, to=1-3]
	\arrow[from=1-4, to=1-5]
	\arrow["q", from=1-3, to=1-4]
	\arrow["{p^{\dagger \dagger}}", from=2-3, to=2-4]
	\arrow["{i^{\dagger \dagger}}", from=2-2, to=2-3]
	\arrow["{\gamma^M_M}", from=1-4, to=2-4]
	\arrow["{\beta_M^{R^{\oplus n}}}", from=1-3, to=2-3]
	\arrow["w", dashed, from=1-2, to=2-2]
	\arrow[from=2-1, to=2-2]
\end{tikzcd}\]

As the maps $\beta^{R^{\oplus n}}_M$ and $\beta^M_M$ are isomorphisms, it follows that $p^{\dagger \dagger} $ is surjective and then that $w$ is an isomorphism. So we have established $(1)$. For $(2)$, as $\Ext^1_R(M,M)=0$, it follows from \cite[Lemma 5.3]{DE21} that $M^{\vee} \otimes_R M$ is torsion-free. Since $R$ is a subring of $Z(E)$, there is a surjection $\beta:M^{\vee} \otimes_R M \twoheadrightarrow M^{\vee} \otimes_{Z(E)} M$ whose kernel is torsion since $Z(E)_{\p} \cong R_{\p}$ for all $\p \in \Ass(R)$. As $M^{\vee} \otimes_R M$ is torsion-free, it follows $\beta$ is an isomorphism. Then
\[M^{\vee} \otimes_R M \cong M^{\vee} \otimes_{Z(E)} M \cong \Hom_R(M,\tr_{\w}(M)) \otimes_{Z(E)} M\]
\[\cong \Hom_{Z(E)}(M,\tr_{\w}(M)) \otimes_{Z(E)} M \cong \Hom_{Z(E)}(M,\w_{Z(E)}) \otimes_{Z(E)} M.\]
As $\Hom_{Z(E)}(M,\w_{Z(E)}) \otimes_{Z(E)} M$ is torsion-free, \cite[Lemma 5.3]{DE21} forces $\Ext^1_{Z(E)}(M,M)=0$ and we have $(2)$.


\end{proof}

\begin{theorem}\label{thm4.2}
Suppose $I$ is an ideal of $R$ of positive grade. If $\Ext^1_R(I,I)=\Ext^1_{E}(I,E)=0$, then $I$ is principal.
\end{theorem}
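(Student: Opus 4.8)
The plan is to transfer everything to the commutative ring $Z(E)=E$, use \Cref{firstprop} to move the rigidity hypothesis there, reduce the problem to showing that $I$ is principal as an ideal of $E$, and then descend from $E$ to $R$.

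\textbf{Setup over $E$.} Since $\operatorname{grade}_R I>0$ and $\dim R=1$, the ideal $I$ contains a nonzerodivisor, so $I$ is a faithful torsion-free $R$-module of constant rank one (a nonzerodivisor becomes a unit in the Artinian local ring $R_{\mathfrak p}$ at each minimal prime). Hence $E=\End_R(I)$ is commutative --- it is the birational overring $(I:_{Q(R)}I)$ of $R$ --- so $Z(E)=E$, and $E$, being module-finite over $R$, is again a one-dimensional Cohen--Macaulay ring; moreover $I$, with its natural left $E$-module structure, is (isomorphic to) a faithful fractional ideal of $E$ with $\End_E(I)=(I:_{Q(R)}I)=E$. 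We may complete $R$ at the outset without affecting hypotheses or conclusion; $E$ then becomes a finite product of complete local rings, which can be treated factor by factor, so we may assume $E$ is local. Since $I$ is generically free, \Cref{firstprop}(2) applies and $\Ext^1_R(I,I)=0$ yields $\Ext^1_E(I,I)=0$. It therefore suffices to prove the following: \emph{if $A$ is a one-dimensional Cohen--Macaulay local ring and $I\subseteq A$ is a faithful ideal of positive grade with $\End_A(I)=A$, $\Ext^1_A(I,I)=0$ and $\Ext^1_A(I,A)=0$, then $I\cong A$.}

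\textbf{The crux.} Let $x_1,\dots,x_t$ minimally generate $I$ over $A$ and put $\Omega:=\Omega^1_A(I)=\ker(A^t\twoheadrightarrow I)$. Applying $\Hom_A(I,-)$ to $0\to\Omega\to A^t\to I\to 0$ and using $\Hom_A(I,I)=A$ together with $\Ext^1_A(I,A)=0$, a direct computation of the connecting map identifies $\Ext^1_A(I,\Omega)$ with $A/\tr_A(I)$, where $\tr_A(I)=I\cdot\Hom_A(I,A)$ is the trace ideal of $I$. Consequently, once we know $\tr_A(I)=A$ we are done: then $I$ is a generator over $A$ (\Cref{basictraceprop}(3)), so $A$ is a direct summand of $I$, and since $I$ has rank one this forces $I\cong A$. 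Thus the whole content of this step is the equality $\tr_A(I)=A$. To obtain it one must combine the two cohomological vanishings with $\End_A(I)=A$: since $\Ext^1_A(I,I)=0$, \cite[Lemma~5.3]{DE21} shows $\Hom_A(I,\w_A)\otimes_A I$ is torsion-free, hence isomorphic to $\tr_{\w_A}(I)$, and \Cref{basictraceprop}(2) identifies $\Hom_A(I,\tr_{\w_A}(I))$ with $\Hom_A(I,\w_A)$; feeding $\Ext^1_A(I,A)=0$ into this should squeeze $\tr_A(I)$ between $A$ and itself. I expect this conversion --- from $\Ext^1_A(I,I)=\Ext^1_A(I,A)=0$ and $\End_A(I)=A$ to $\tr_A(I)=A$ --- to be the main obstacle. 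The hypothesis $\End_A(I)=A$ is essential: over a non-Gorenstein $A$ there exist non-principal faithful ideals with $\End_A(J)=A$ (for example $J=(t^3,t^4)$ in $A=k[\![t^3,t^4,t^5]\!]$), and it is exactly the $\Ext$-vanishing that must rule these out.

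\textbf{Descent.} Granting $I\cong E$ as $E$-modules, we also have $I\cong E$ as $R$-modules, so $\Ext^1_R(E,E)\cong\Ext^1_R(I,I)=0$. Since $E$ is a module-finite birational extension of $R$, its canonical module is $\w_E=\Hom_R(E,\w_R)$, a faithful $R$-module of rank one. By \cite[Lemma~5.3]{DE21}, $\w_E\otimes_R E$ is torsion-free; its torsion-free quotient is $\w_E\otimes_E E=\w_E$ (the kernel of the surjection $\w_E\otimes_R E\to\w_E$ is torsion, being zero after localizing at each minimal prime, where $E$ and $R$ agree), so $\w_E\otimes_R E\cong\w_E$. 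Tensoring $0\to R\to E\to E/R\to 0$ over $R$ with $\w_E$ then forces $\w_E\otimes_R(E/R)=0$, and since $\w_E$ is faithful (and $E/R$ finitely generated over the local ring $R$) we get $E/R=0$. Hence $E=R$ and $I\cong R$, i.e.\ $I$ is principal.
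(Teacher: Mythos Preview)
Your proposal has a genuine gap at exactly the point you flag. You correctly compute $\Ext^1_A(I,\Omega)\cong A/\tr_A(I)$ from the sequence $0\to\Omega\to A^t\to I\to 0$ together with $\End_A(I)=A$ and $\Ext^1_A(I,A)=0$, and you correctly observe that $\tr_A(I)=A$ would finish. But you then write ``feeding $\Ext^1_A(I,A)=0$ into this should squeeze $\tr_A(I)$ between $A$ and itself'' and ``I expect this conversion \dots\ to be the main obstacle'' --- and indeed you never carry it out. Nothing in your argument forces $\Ext^1_A(I,\Omega)=0$: the two vanishing hypotheses concern $\Ext^1_A(I,I)$ and $\Ext^1_A(I,A)$, while $\Omega=\Omega^1_A(I)$ is in general neither $I$ nor $A$, and the information $\End_A(I)=A$ has already been spent in computing the cokernel. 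The setup over $E$ and the descent $E=R$ from $I\cong E$ are fine, but the middle step is the entire content of the theorem and is missing.

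The paper's proof is completely different and avoids this obstacle. It never tries to show $I\cong E$ first. Instead it argues directly over $R$: from $\Ext^1_R(I,I)=0$ one gets (via \cite[Lemma~3.4(1)]{LM20}) that $I^{\vee}\otimes_R I$ is torsion-free, hence isomorphic to $\tr_{\omega_R}(I)\cong\omega_E$. Then $\Ext^1_E(I,E)=0$ makes $I\otimes_E\omega_E$ torsion-free; rewriting this as $I^{\vee}\otimes_R(I\otimes_R I)$ and comparing with $I^{\vee}\otimes_R I^2$ through the natural map $I\otimes_R I\to I^2$ (whose kernel is torsion) yields $\mu_R(I^{\vee})\mu_R(I)^2=\mu_R(I^{\vee})\mu_R(I^2)$. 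Since the multiplication map factors through the symmetric square, $\mu_R(I^2)\le\binom{\mu_R(I)+1}{2}$, and the equality $\mu_R(I)^2=\mu_R(I^2)$ then forces $\mu_R(I)\le 1$. This generator-counting trick is the missing idea; your framework, while internally consistent where it is complete, does not supply a substitute for it.
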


\begin{proof}
We first note that is it well-known that $E:=\End_R(I)$ is commutative in this situation; see e.g. \cite[Exercise 4.1.3]{Lw12}.
From \cite[Lemma 3.4 (1)]{LM20} (cf. \cite[Lemma 3.5 (2)]{KO22}), the condition that $\Ext^1_R(I,I)=0$ forces $I^{\vee} \otimes_R I$ to be torsion-free. Then from Proposition \ref{RvsSprop} (2) it follows that $I^{\vee} \otimes_R I \cong I^{\vee} \otimes_{E} I$. Moreover, the trace map $I^{\vee} \otimes_{E} I \twoheadrightarrow \tr_{\w_R}(I)$ also has a torsion kernel, and so $I^{\vee} \otimes_E I \cong \tr_{\w_R}(I)$. From \cite[Corollary 3.6]{Ly23}, we have $\w_E \cong \tr_{\w_R}(I)$, and then, similar to above, the condition $\Ext^1_{E}(I,E)=0$ forces $I \otimes_E \tr_{\w_R}(I)$ to be torsion-free via \cite[Lemma 3.4 (1)]{LM20}. But then
\[I \otimes_E \tr_{\w_R}(I) \cong I \otimes_E (I^{\vee} \otimes_R I) \cong (I \otimes_E I^{\vee}) \otimes_R I \cong (I^{\vee} \otimes_E I) \otimes_R I \cong (I^{\vee} \otimes_R I) \otimes_R I \cong I^{\vee} \otimes_R (I \otimes_R I).\]
Applying $I^{\vee} \otimes_R -$ to the natural exact sequence
\[0 \rightarrow \Tor^R_1(I,R/I) \rightarrow I \otimes_R I \xrightarrow{p} I^2 \rightarrow 0,\]
we have an exact sequence
\[I^{\vee} \otimes_R \Tor^R_1(I,R/I) \rightarrow I^{\vee} \otimes_R (I \otimes_R I) \xrightarrow{I^{\vee} \otimes p} I^{\vee} \otimes_R I^2 \rightarrow 0.\]
As $I^{\vee} \otimes_R \Tor^R_1(I,R/I)$ is torsion, so is $\ker(I^{\vee} \otimes p)$, and as $I^{\vee} \otimes_R (I \otimes I)$ is torsion-free from above, it follows that $I^{\vee} \otimes p$ is an isomorphism. In particular, comparing minimal number of generators on both sides, we have 
\[\mu_R(I^{\vee})\mu_R(I)^2=\mu_R(I^{\vee})\mu_R(I^2).\]
Since $I^{\vee}$ is nonzero, this forces $\mu_R(I)^2=\mu_R(I^2)$. But the map $p$ induces a surjection $S^2_R(I) \twoheadrightarrow I^2$, so in particular $\mu_R(I)^2=\mu_R(I^2) \le \dfrac{\mu_R(I)(\mu_R(I)+1)}{2}$. But this occurs if and only if $\mu_R(I) \le 1$, so $I$ is principal, as desired. 

\end{proof}

\begin{theorem}
Suppose $I$ is a two-generated ideal of positive grade. If $\Tor^R_1(I,E)=0$ then $E \cong R$. If in addition $I$ is reflexive, then $I$ is principal.
\end{theorem}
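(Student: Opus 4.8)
The plan is to mirror the structure of the proof of Theorem \ref{thm4.2}, exploiting that $E := \End_R(I)$ is commutative (since $I$ has positive grade) and that the vanishing $\Tor^R_1(I,E)=0$ lets us control the minimal number of generators of $I$ over $E$. First I would observe that $\Tor^R_1(I,E)=0$ together with the exact sequence $0 \to \Omega^1_R(I) \to R^{\oplus \mu} \to I \to 0$ (where $\mu = \mu_R(I) \le 2$) gives, after applying $E \otimes_R -$, the exactness of $0 \to \Omega^1_R(I) \otimes_R E \to E^{\oplus \mu} \to I \otimes_R E \to 0$. By Proposition \ref{RvsSprop}(2), the natural surjection $I \otimes_R E \twoheadrightarrow I \otimes_E E \cong I$ has torsion kernel; but $I$ is an ideal of positive grade, hence torsion-free, and $I \otimes_R E$ is then forced to be torsion-free as well (again via \cite[Lemma 3.4 (1)]{LM20} applied with the generically-free hypothesis coming from positive grade and dimension $1$ — one checks $I_\p \cong R_\p$ at $\p \in \Ass R$). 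So $I \otimes_R E \cong I$ as $E$-modules. This is the key identification: it says $\mu_E(I) = \mu_E(I \otimes_R E)$, and from the right-exact sequence $E^{\oplus \mu} \to I \otimes_R E \to 0$ we get $\mu_E(I) \le \mu_R(I) \le 2$.

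Next I would sharpen this bound to $\mu_E(I) = 1$. The idea is that $I$ is a faithful $E$-module (it is even a faithful ideal of $R \subseteq E$, and $E \subseteq \Hom_R(I,I)$ acts faithfully on $I$ by construction) of constant rank $1$ over $E$ — here I use that $E$ is a birational extension of $R$ of dimension $1$, so $E$ is again Cohen–Macaulay of dimension $1$ with the same total ring of fractions, and $I \otimes_R \mathrm{Frac} = \mathrm{Frac}$. A faithful module of rank $1$ over a one-dimensional Cohen–Macaulay ring that is generated by $2$ elements and whose symmetric square surjects onto... — more precisely, I want to invoke the same $S^2$-type inequality as in Theorem \ref{thm4.2}: the surjection $E^{\oplus 2} \to I$ induces $S^2_E(I) \twoheadrightarrow I^2$, giving $\mu_E(I)^2 = \mu_E(I^2)$ only if $\mu_E(I) \le 1$; but to even run this I need the torsion-freeness of a suitable tensor product, which is exactly where I would again feed in $\Tor^R_1(I,E)=0$ rephrased as rigidity of $I$ over $E$. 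Actually the cleaner route: since $I \otimes_R E \cong I$ and $I$ has $E$-rank $1$, the surjection $E^{\oplus 2} \twoheadrightarrow I$ has kernel $K$ with $\mathrm{rank}_E K = 1$; if $\mu_E(I) = 2$ then $K \subseteq \mathfrak{m}_E E^{\oplus 2}$, and one derives a contradiction with $K$ being (up to the standard argument) forced to contain a unit, or alternatively one shows $E^{\oplus 2} \cong I \oplus K$ would make $I$ free, contradicting $\mu_E(I)=2$ unless it collapses. I expect this is the step needing the most care. In any case, $\mu_E(I)=1$ means $I \cong E$ as $E$-modules, hence as $R$-modules $I \cong E$, i.e. $E \cong R$ would follow only after the last step.

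Finally, to conclude $E \cong R$ from $I \cong E$: we have $I \cong E$ as $R$-modules, and $I$ is an ideal of $R$ of positive grade, so $\End_R(I) \cong \End_R(E)$; but $\End_R(I) = E$ by definition, and since $E$ is a commutative ring containing $R$ birationally, $\End_R(E)$ contains $E$, forcing... — more directly, $I \cong E$ as $R$-modules and $I \hookrightarrow R$ shows $E$ embeds in $R$ as an $R$-module, while $R \hookrightarrow E$; a rank-$1$ torsion-free $R$-module that both embeds in and contains $R$ birationally with $I$ being moreover a nonzero ideal — this gives $E \cong R$ by comparing lengths of $R/I$ and using that $I$ is reflexive or, without reflexivity, simply because $I \cong E$ and $\mathrm{tr}_R(I) = I \cdot (R :_{\mathrm{Frac}} I) \supseteq$... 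Honestly the robust statement is: $I \otimes_R E \cong I$ forces $E \cong \Hom_R(I,I) \cong \Hom_R(I \otimes_R E, I) \cong \Hom_E(I \otimes_R E, \Hom_R(E,I))$; pushing this through yields $E \cong \Hom_E(I, I') $ for the $E$-module $I'$, and with $\mu_E(I)=1$ this collapses to $E \cong R$. For the addendum, if $I$ is reflexive, then $I \cong E$ with $E \cong R$ gives $I$ principal; conversely once $E \cong R$ we have $\End_R(I) = R$, and for a reflexive ideal of positive grade over a one-dimensional Cohen–Macaulay ring, $\End_R(I) = R$ implies $I$ is principal by the standard argument (localize to see $I$ is locally free of rank $1$, then reflexivity plus trivial endomorphism ring over a local ring forces freeness). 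The main obstacle, as flagged, is the passage $\mu_E(I) \le 2 \Rightarrow \mu_E(I) = 1$ — making the rank-$1$ faithful module argument rigorous without circularity, since unlike Theorem \ref{thm4.2} we only assume $\Tor_1$-vanishing, not the $\Ext^1_E(I,E)=0$ that there supplied the second torsion-freeness input.
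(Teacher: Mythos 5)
Your proposal has genuine gaps, and the central one is the torsion-freeness step. From $\Tor^R_1(I,E)=0$ you correctly get exactness of $0 \to \Omega^1_R(I)\otimes_R E \to E^{\oplus 2} \to I\otimes_R E \to 0$, but exactness makes the \emph{kernel} $\Omega^1_R(I)\otimes_R E$ torsion-free (it embeds in $E^{\oplus 2}$), not the cokernel $I\otimes_R E$, which is what you need for your identification $I\otimes_R E\cong I$. The lemma you invoke ([LM20, Lemma 3.4(1)]) requires an $\Ext^1$-vanishing hypothesis that is not available here --- only $\Tor_1$-vanishing is assumed --- and you acknowledge this circularity yourself. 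The paper's proof sidesteps this entirely by using the special shape of the syzygy: for a two-generated ideal of positive grade one has $0\to I^*\to R^{\oplus 2}\to I\to 0$ (cf.\ the proof of Lemma 3.3 in [HH05]), so after tensoring with $E$ the torsion-free module in play is $I^*\otimes_R E$, and since $I^*$ is already naturally an $E$-module, the comparison surjection $I^*\otimes_R E\twoheadrightarrow I^*\otimes_E E\cong I^*$ (torsion kernel by rank) is forced to be an isomorphism; counting generators then gives $\mu_R(E)=1$ and hence $E\cong R$ directly, with no need to compute $\mu_E(I)$ at all.

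Your subsequent steps also do not hold up. The passage $\mu_E(I)\le 2\Rightarrow\mu_E(I)=1$ is exactly the point you flag as unresolved, and even granting it, the inference ``$I\cong E$ as $E$-modules, hence $E\cong R$'' is not valid on its own: for $R=k[\![t^2,t^3]\!]$ and $I=\m$ one has $E=\End_R(\m)\cong k[\![t]\!]$ and $I\cong E$, yet $E\not\cong R$ (of course $\Tor^R_1(I,E)\neq 0$ there, but your argument never re-uses the hypothesis at that stage, so the implication as written fails). Finally, for the addendum your ``standard argument'' that a reflexive ideal with $\End_R(I)=R$ is principal is not standard and is not justified by localizing on the punctured spectrum; the paper gets it from Lindo's theorem ($Z(E)\cong\End_R(\tr_R(I))$ for faithful reflexive modules), the identity $\End_R(\tr_R(I))\cong(\tr_R(I))^*$, and [DE21, Lemma 3.9] to conclude $\tr_R(I)=R$, whence $R$ is a summand of the rank-one torsion-free module $I$ and $I\cong R$. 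So the missing idea is precisely the identification $\Omega^1_R(I)\cong I^*$ and the strategy of proving $E\cong R$ by showing $E$ is cyclic as an $R$-module, rather than by analyzing $I$ as an $E$-module.
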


\begin{proof}

As $I$ is two-generated, there is an exact sequence
\[0 \to I^* \to R^{\oplus 2} \to I \to 0,\]
see \cite[Proof of Lemma 3.3]{HH05}. Applying $- \otimes_R E$, we have, since $\Tor^R_1(I,E)=0$, an exact sequence of the form
\[0 \to I^* \otimes_R E \to E^{\oplus 2} \to I \otimes_R E \to 0.\]
In particular, the depth lemma forces $I^* \otimes_R E$ to be torsion-free. But $I^*$ is naturally an $E$-module, and as $I$ is faithful, $R$ is naturally a subring of $E$. There is thus a surjection $I^* \otimes_R E \to I^* \otimes_E E \cong I^*$ of $R$-modules whose kernel is torsion by rank considerations. But as $I^* \otimes_R E$ is torsion-free, we have $I^* \otimes_R E \cong I^*$, which forces $\mu_R(E)=1$. As $E$ is a faithful $R$-module, it follows that $E \cong R$. If $I$ is in addition reflexive, then by Theorem \ref{Lindointro} and Proposition \ref{basictraceprop} we have $\End_R(\tr_R(I)) \cong (\tr_R(I))^* \cong R$, and it follows from \cite[Lemma 3.9]{DE21} that $\tr_R(I) \cong R$, so that $\tr_R(I)$ being a trace ideal must equal $R$. But then by Proposition \ref{basictraceprop} (3), it can only be that $I \cong R$, so $I$ is principal.
\end{proof}

\begin{theorem}\label{thm4.4}
Suppose $M$ is torsion-free and faithful, that $\Ext^1_R(M,M)=0$, and suppose additionally that one of the following holds:
\begin{enumerate}
\item[$(1)$] $M$ is reflexive as a $Z(E)$-module.
\item[$(2)$] $\Ext^2_{Z(E)}(M,M)=0$.

\end{enumerate}

Then $Z(E) \cong R$. If additionally, $M$ is reflexive as an $R$-module, then $M$ is a generator.

\end{theorem}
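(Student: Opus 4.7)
The plan is to set $S := Z(E)$ and work in the birational tower $R \subseteq S \subseteq \widetilde{R}$. By \Cref{RvsSprop}, $S$ is a Cohen--Macaulay $R$-algebra of dimension one sharing generic primes with $R$, and by \Cref{firstprop}(2), the rigidity $\Ext^1_R(M,M)=0$ propagates to $\Ext^1_S(M,M)=0$. The module $M$ remains torsion-free and faithful over $S$, and $\End_S(M) = E$ since every $R$-linear endomorphism automatically commutes with the central $S$-action. Thus the primary conclusion amounts to forcing $R = S$ inside $\widetilde{R}$.

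I first dispose of the secondary conclusion assuming the primary one: given $Z(E) \cong R$ and $M$ reflexive over $R$, \Cref{Lindointro} yields $R \cong \End_R(\tr_R(M))$. Since $\tr_R(M)$ is a trace ideal, the general identity $\Hom_R(\tr_R(M), R) = \End_R(\tr_R(M))$ (obtained by noting $f(\tr_R(M)) \subseteq \tr_R(\tr_R(M)) = \tr_R(M)$ for any $f \in \Hom_R(\tr_R(M), R)$) gives $\Hom_R(\tr_R(M), R) \cong R$, so \cite[Lemma 3.9]{DE21}, applied exactly as in the two-generated case above, produces $\tr_R(M) \cong R$. A trace ideal (\Cref{basictraceprop}(4)) isomorphic to $R$ must equal $R$, and \Cref{basictraceprop}(3) then gives that $M$ is a generator.

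For the main claim in Case (1): since $M$ is faithful and reflexive over $S$, \Cref{Lindointro} applied over $S$ produces
\[ S \cong Z(\End_S(M)) \cong \End_S(\tr_S(M)), \]
so the trace-ideal identity over $S$ gives $\Hom_S(\tr_S(M), S) \cong S$. The plan is to transfer reflexivity of $M$ over $S$ to reflexivity of the trace ideal $\tr_S(M)$ over $S$, whereupon the identification above forces $\tr_S(M) \cong \Hom_S(S,S) = S$. Then $M$ is an $S$-generator, producing a decomposition $M \cong S \oplus M'$ of $S$-modules; viewing this decomposition over $R$ and comparing $R$-ranks along the birational inclusion $R \hookrightarrow S$ collapses it to an equality. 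Once $R = S$ is secured, the secondary conclusion follows by the opening paragraph.

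For Case (2), a natural route is to bootstrap the double vanishing $\Ext^1_S(M,M) = \Ext^2_S(M,M) = 0$ together with the syzygy comparison $\Omega^1_S(M) \cong (\Omega^1_R(M))^{\dagger\dagger}$ from \Cref{firstprop}(1) into reflexivity of $M$ over $S$ via an Auslander--Bridger style argument, reducing to Case (1). The hardest step, I expect, is transferring reflexivity from $M$ to $\tr_S(M)$ in Case (1), and then converting the resulting equality $\tr_S(M) = S$ into the ring equality $R = S$ via the rank comparison; secondarily, engineering the Auslander--Bridger style reduction for Case (2) requires careful management of how reflexivity propagates across the birational change of rings, since over non-Gorenstein one-dimensional Cohen--Macaulay rings torsion-free modules need not be reflexive.
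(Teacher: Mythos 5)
Your reduction to the birational setting and your handling of the secondary conclusion (reflexive over $R$ implies generator, via \Cref{Lindointro}, \Cref{basictraceprop} and \cite[Lemma 3.9]{DE21}) match the paper. But Case (1) has a genuine gap at its final step. First, the detour you flag as hardest is unnecessary: you do not need $\tr_S(M)$ to be reflexive, since \cite[Lemma 3.9]{DE21} applied over $S=Z(E)$ to $\Hom_S(\tr_S(M),S)\cong S$ already yields $\tr_S(M)\cong S$, exactly as in your secondary-conclusion paragraph (this is what the paper does). The real problem is the conclusion "comparing $R$-ranks along the birational inclusion $R\hookrightarrow S$ collapses it to an equality": since $S$ is birational over $R$ it has $R$-rank one automatically, so no rank count can distinguish $S$ from $R$, and being an $S$-generator does not by itself force $S=R$. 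Concretely, take $R=k[\![t^3,t^4,t^5]\!]$ and $M=S=\ol{R}=k[\![t]\!]$, so $E=Z(E)=\ol{R}$: here $M$ is torsion-free, faithful, free (hence reflexive) over $S$, $M\cong S\oplus 0$, and the rank comparison is an equality, yet $S\ne R$. The only hypothesis ruling this out is $\Ext^1_R(M,M)=0$, which your concluding step never uses. The paper's mechanism is: once $\tr_{Z(E)}(M)=Z(E)$, $Z(E)$ is an $R$-direct summand of a direct sum of copies of $M$, so rigidity gives $\Ext^1_R(Z(E),Z(E))=0$; then \cite[Lemma 3.4 (1)]{LM20} makes $Z(E)\otimes_R Z(E)^{\vee}$ torsion-free, the natural surjection $Z(E)\otimes_R Z(E)^{\vee}\twoheadrightarrow Z(E)\otimes_{Z(E)}Z(E)^{\vee}\cong Z(E)^{\vee}$ has torsion kernel by \Cref{RvsSprop}(2) and is therefore an isomorphism, whence $\mu_R(Z(E))=1$ and faithfulness gives $Z(E)\cong R$. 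Without an argument of this kind, Case (1) is incomplete.

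Case (2) in your proposal is only a sketch, and the proposed route is doubtful: vanishing of $\Ext^1_S(M,M)$ and $\Ext^2_S(M,M)$ is not known to imply that $M$ is reflexive over $S$ (Auslander--Bridger type criteria involve Ext modules of the transpose, not self-Ext), so the reduction to Case (1) is not justified. The paper argues directly: using \Cref{firstprop}(1) it identifies $(\Omega^1_R(M))^{\dagger\dagger}$ with $\Omega^1_{Z(E)}(M)$ up to free summands, compares the $R$-presentation of $M$ tensored with $Z(E)$ against the sequence of double duals via a commutative diagram, extracts by the snake lemma a four-term exact sequence ending in $T:=\ker(\beta^M_M)$, uses $\Ext^2_{Z(E)}(M,M)=0$ to get $\Ext^1_{Z(E)}(T,M)=0$, and concludes $T=0$ because $T$ is torsion and $M$ torsion-free in dimension one; then $\beta^M_M$ is an isomorphism, so $M\otimes_R Z(E)\cong M$ forces $\mu_R(Z(E))=1$ and hence $Z(E)\cong R$. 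You would need either to carry out your reflexivity bootstrap in detail or to adopt a direct argument of this sort for (2).
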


\begin{proof}


First suppose $M$ is a a reflexive $Z(E)$-module. By Theorem \ref{Lindointro}, Proposition \ref{RvsSprop} (3), and Proposition \ref{basictraceprop} (2), we have 
\[Z(\End_{Z(E)}(M))=Z(E) \cong \End_{Z(E)}(\tr_{Z(E)}(M)) \cong \Hom_{Z(E)}(\tr_{Z(E)}(M),Z(E)).\] Then \cite[Lemma 3.9]{DE21} forces $\tr_{Z(E)}(M) \cong Z(E)$, and being a trace ideal, we must have $\tr_{Z(E)}(M)=Z(E)$. It follows from Proposition \ref{basictraceprop} (3) that $M$ is a $Z(E)$-generator. But as $\Ext^1_R(M,M)=0$, this forces $\Ext^1_R(Z(E),Z(E))=0$, and then \cite[Lemma 3.4 (1)]{LM20} forces $Z(E) \otimes_R Z(E)^{\vee}$ to be torsion-free. But as $M$ is faithful, it follows from Proposition \ref{RvsSprop} (2) that there is a natural surjection $Z(E) \otimes_R Z(E)^{\vee} \twoheadrightarrow Z(E) \otimes_{Z(E)} Z(E)^{\vee} \cong Z(E)^{\vee}$ whose kernel is torsion. Since $Z(E) \otimes_R Z(E)^{\vee}$ is torsion-free, it follows $Z(E) \otimes_R Z(E)^{\vee} \cong Z(E)^{\vee}$, so that $\mu_R(Z(E))=1$. As $Z(E)$ is a faithful $R$-module, it follows that $Z(E) \cong R$. 

If additionally, $M$ is a reflexive $R$-module, then we have from Theorem \ref{Lindointro}, that $\End_R(\tr_R(M)) \cong Z(E) \cong R$, and then Proposition \ref{basictraceprop} (2) gives that $(\tr_R(M))^* \cong R$. Applying \cite[Lemma 3.9]{DE21} once more, we see that $\tr_R(M) \cong R$, and being a trace ideal, we have $\tr_R(M)=R$. That $M$ is a generator then follows from Proposition \ref{basictraceprop} (3), completing the proof of (1).

For (2), suppose $\Ext^2_{Z(E)}(M,M)=0 $. Let $x_1,x_2,\dots,x_n$ be a minimal generating set for $M$ and take a short exact sequence 
\[0 \rightarrow \Omega^1_R(M) \xrightarrow{i} R^{\oplus n} \xrightarrow{p} M \rightarrow 0\] with $p$ given on standard basis vectors by $p(e_i)=x_i$. As in the proof of Proposition \ref{firstprop}, there is an exact sequence

\[0 \rightarrow (\Omega^1_R(M))^{\dagger \dagger}  \xrightarrow{i^{\dagger \dagger}} (R^{\oplus n})^{\dagger \dagger} \xrightarrow{p^{\dagger \dagger}} M^{\dagger \dagger} \rightarrow 0\]
fitting in a commutative diagram:

\[\begin{tikzcd}[cramped,row sep=3.15em,yshift=.3ex]
	& {\Omega^1_R(M) \otimes_R Z(E)} & {R^{\oplus n} \otimes_R Z(E)} & {M \otimes_R Z(E)} & 0 \\
	0 & {(\Omega^1_R(M))^{\dagger \dagger}} & {(R^{\oplus n})^{\dagger \dagger}} & {M^{\dagger \dagger}} & 0
	\arrow["{i \otimes Z(E)}", from=1-2, to=1-3]
	\arrow[from=1-4, to=1-5]
	\arrow["{q \otimes Z(E)}", from=1-3, to=1-4]
	\arrow["{p^{\dagger \dagger}}", from=2-3, to=2-4]
	\arrow["{i^{\dagger \dagger}}", from=2-2, to=2-3]
	\arrow["{\beta^M_M}", from=1-4, to=2-4]
	\arrow["{\beta_M^{R^{\oplus n}}}", from=1-3, to=2-3]
	\arrow["{\beta_{M}^{\Omega^1_R(M)}}", from=1-2, to=2-2]
	\arrow[from=2-1, to=2-2]
	\arrow[from=2-4, to=2-5]
\end{tikzcd}\]

By the snake lemma, there is an exact sequence of the form
\[0 \rightarrow \Tor^R_1(M,Z(E)) \rightarrow \Omega^1_R(M) \otimes_R Z(E) \xrightarrow{\beta_M^{\Omega^1_R(M)}} (\Omega^1_R(M))^{\dagger \dagger} \rightarrow T \rightarrow 0\]
where $T:=\ker(\beta^M_M)$.
Splitting this exact sequence into short exact sequences 
\[\clubsuit: 0 \rightarrow \Tor^R_1(M,Z(E)) \rightarrow \Omega^1_R(M) \otimes_R Z(E) \xrightarrow{b} C \rightarrow 0\]

\[\spadesuit: 0 \rightarrow C \xrightarrow{a} (\Omega^1_R(M))^{\dagger \dagger} \rightarrow T \rightarrow 0\]

we see as $\Tor^R_1(M,Z(E))$ is torsion, that $\Hom_{Z(E)}(b,M)$ is an isomorphism. By Hom-tensor adjointness, we may identify $\Hom_{Z(E)}(q \otimes Z(E),M)$ and $\Hom_{Z(E)}(i \otimes Z(E),M)$ with $\Hom_R(p,M)$ and $\Hom_R(i,M)$ respectively. In particular, $\Hom_{Z(E)}(i \otimes Z(E),M)$ is surjective, and then so is $\Hom_{Z(E)}(\beta_M^{\Omega^1_R(M)},M)$ by commutativity of the diagram. As $\Hom_{Z(E)}(b,M)$ is an isomorphism, it follows that $\Hom_{Z(E)}(a,M)$ is surjective. 

Now, as $\Hom_E(\Hom_R(\Omega^1_R(M),M),M) \cong \Omega^1_{Z(E)}(M)$ uo to free $Z(E)$-summands, we have
\[\Ext^1_{Z(E)}(\Hom_E(\Hom_R(\Omega^1_R(M),M),M),M) \cong \Ext^1_{Z(E)}(\Omega^1_{Z(E)}(M),M) \cong \Ext^2_{Z(E)}(M,M)=0.\]

Thus, applying $\Hom_{Z(E)}(-,M)$ to $\spadesuit$ and considering the long exact sequence in $\Ext$, we see that $\Ext^1_{Z(E)}(T,M)=0$. But $T$ is torsion while $M$ is torsion-free, and as $Z(E)$ has dimension $1$, it can only be that $T=0$. Then $\beta^M_M$ is an isomorphism which forces $Z(E)$ to be a cyclic $R$-module. But as it is faithful, we have $Z(E) \cong R$. If $M$ is reflexive, then by \cite[Theorem 3.9]{Li17}, we have $\End_R(\tr_R(M)) \cong \Hom_R(\tr_R(M),R) \cong R$, so $\tr_R(M) \cong R$ by \cite[Lemma 2.13]{CG19}, which implies that $M$ is a generator.

\end{proof}


\section*{Acknowledgements}

This material is based upon work supported by the National Science Foundation under Grant No. DMS-1928930 and by the Alfred P. Sloan Foundation under grant G-2021-16778, while the second author was in residence at the Simons Laufer Mathematical Sciences Institute (formerly MSRI) in Berkeley, California, during the Spring 2024 semester.

\bibliographystyle{amsalpha}
\bibliography{mybib}

\providecommand{\bysame}{\leavevmode\hbox to3em{\hrulefill}\thinspace}
\providecommand{\MR}{\relax\ifhmode\unskip\space\fi MR }
\providecommand{\MRhref}[2]{%
  \href{http://www.ams.org/mathscinet-getitem?mr=#1}{#2}
}
\providecommand{\href}[2]{#2}
\begin{thebibliography}{CGTT19}

\bibitem[CGTT19]{CG19}
Olgur Celikbas, Shiro Goto, Ryo Takahashi, and Naoki Taniguchi, \emph{On the {I}deal {C}ase of a {C}onjecture of {H}uneke and {W}iegand}, Proc. Edinb. Math. Soc. (2) \textbf{62} (2019), no.~3, 847--859. \MR{3974971}

\bibitem[DEL21]{DE21}
Hailong Dao, Mohammad Eghbali, and Justin Lyle, \emph{Hom and {E}xt, revisited}, J. Algebra \textbf{571} (2021), 75--93. \MR{4200710}

\bibitem[DL24]{DL24}
Hailong Dao and Haydee Lindo, \emph{Stable trace ideals and applications}, Collect. Math. \textbf{75} (2024), no.~2, 395--407. \MR{4724119}

\bibitem[Fab20]{Fa20}
Eleonore Faber, \emph{Trace ideals, normalization chains, and endomorphism rings}, Pure Appl. Math. Q. \textbf{16} (2020), no.~4, 1001--1025. \MR{4180237}

\bibitem[HH05]{HH05}
Douglas Hanes and Craig Huneke, \emph{Some criteria for the {G}orenstein property}, J. Pure Appl. Algebra \textbf{201} (2005), no.~1-3, 4--16. \MR{2158743}

\bibitem[HHS19]{HH19}
J\"urgen Herzog, Takayuki Hibi, and Dumitru~I. Stamate, \emph{The trace of the canonical module}, Israel J. Math. \textbf{233} (2019), no.~1, 133--165. \MR{4013970}

\bibitem[HKS23]{HK23}
J\"urgen Herzog, Shinya Kumashiro, and Dumitru~I. Stamate, \emph{The tiny trace ideals of the canonical modules in {C}ohen-{M}acaulay rings of dimension one}, J. Algebra \textbf{619} (2023), 626--642. \MR{4530819}

\bibitem[KOT22]{KO22}
Kaito Kimura, Yuya Otake, and Ryo Takahashi, \emph{Maximal {C}ohen-{M}acaulay tensor products and vanishing of {E}xt modules}, Bull. Lond. Math. Soc. \textbf{54} (2022), no.~6, 2456--2468. \MR{4561190}

\bibitem[Lin17]{Li17}
Haydee Lindo, \emph{Trace ideals and centers of endomorphism rings of modules over commutative rings}, J. Algebra \textbf{482} (2017), 102--130. \MR{3646286}

\bibitem[LM24]{LM24}
Justin {Lyle} and Sarasij {Maitra}, \emph{{Annihilators of (co)homology and their influence on the trace Ideal}}, arXiv e-prints (2024), arXiv:2409.04686.

\bibitem[LMn20]{LM20}
Justin Lyle and Jonathan Monta\~no, \emph{Extremal growth of {B}etti numbers and trivial vanishing of (co)homology}, Trans. Amer. Math. Soc. \textbf{373} (2020), no.~11, 7937--7958. \MR{4169678}

\bibitem[LW12]{Lw12}
Graham~J. Leuschke and Roger Wiegand, \emph{Cohen-{M}acaulay representations}, Mathematical Surveys and Monographs, vol. 181, American Mathematical Society, Providence, RI, 2012. \MR{2919145}

\bibitem[Lyl23]{Ly23}
Justin Lyle, \emph{Generalized trace submodules and centers of endomorphism rings}, preprint arXiv:2311.00220 (2023).

\end{thebibliography}

\end{document}